\newcommand{\Jcom}[1]{\textcolor{blue}{[J:#1]}}
\renewcommand{\phi}{\varphi}
\newcommand{\id}{\mathrm{id}}
\newcommand{\comment}[1]{}
\newcommand{\Xsing}{X^\tsing}
\newcommand{\ol}[1]{\overline{#1}}
\newcommand{\Wedge}{\bigwedge}
\newcommand{\im}{\operatorname{im}}
\newcommand{\FF}{\mathbb{F}}
\title{Singular loci in varieties of tensors}
\author[Chiu]{Christopher H.~Chiu}
\address{University of Bern, Mathematical Institute, Sidlerstrasse 5,
3012 Bern, Switzerland; and KU Leuven, Department of Mathematics, Celestijnenlaan 200b, Box 02400, 3001 Heverlee, Belgium.}
\email{christopher.chiu@unibe.ch}
\thanks{CC was supported by SNSF postdoc fellowship 217058 and FWO junior postdoc fellowship 12AZ524N.}
\author[Danelon]{Alessandro Danelon}
\address{University of Michigan, 530 Church St, Ann Arbor, Michigan 48109}
\email{adanelon@umich.edu}
\author[Draisma]{Jan Draisma}
\address{University of Bern, Mathematical Institute, Sidlerstrasse 5,
3012 Bern, Switzerland}
\email{jan.draisma@unibe.ch}
\thanks{JD was partially supported by project grant 200021-227864 from the Swiss National Science Foundation}
\begin{document}

\begin{abstract}
A $\bVec$-variety is a suitable functor $X$ from finite-dimensional vector
spaces to finite-dimensional varieties. Most varieties in the geometry
of tensors, e.g.\ the variety of
$d$-way tensors of slice rank at most $r$, are of this form. 
We prove that the singular locus
of a $\bVec$-variety is a proper closed $\bVec$-subvariety, analogously
to the situation for ordinary finite-dimensional varieties. Via earlier
work of the third author, this implies that these singular loci admit
a description by finitely many polynomial equations.

A natural follow-up question to our main result is whether a $\bVec$-variety
also admits a suitably functorial resolution of singularities. We
establish some preliminary results in this direction in the
regime where the dimension of $X(V)$ grows linearly with that of $V$.
\end{abstract}

\subjclass[2020]{%
\scriptsize 14B05, 14L30, 14N07, 20G05.}

\maketitle

\setcounter{tocdepth}{1}

\section{Introduction}\label{sec:Introduction}

\subsection{Main result}

A crucial observation in the geometry of tensors is that many varieties of
tensors depend functorially on the choice of a finite-dimensional vector
space. The goal of this paper is to show that the singular locus of such
a variety exhibits the same functoriality, and can hence be described in
finite terms. Our main result is a vast generalisation of the following
well-known example.

\begin{xmp}\label{xmp:singularlocusboundedmatrices}
The singular locus of the variety of $n \times n$-matrices of rank at most $r$
is the variety of matrices of rank at most $r-1$. This statement is
independent of $n$, provided that $n>r$. 
\end{xmp}

Let $K$ be a field of characteristic zero and let $\bVec$ denote the
category of finite-dimensional vector spaces over $K$. The varieties
that we consider are certain functors $X$ from $\bVec$ to the category of
affine schemes of finite type. For $V \in \bVec$, the variety $X(V)$ is
a closed subvariety of an affine space $P(V)$, where
$P$ is a direct sum of Schur functors. Thus $P(V)$ can be thought of as a
space of tensors of fixed type whose size varies with $V$, and $X(V)$ is
a closed subvariety, functorial in $V$; see Section~\ref{sec:Background}
for details.  

In the example above, $P(V)$ equals $V \otimes V$, and $X$ sends
$V$ to the variety of tensors in $P(V)$ of rank at most $r$. Note that for a
linear map $\phi:V \to W$, the linear map $P(\phi)=\phi \otimes \phi$
maps $X(V)$ into $X(W)$, so that $X$ is, indeed, a functor from
$\bVec$ to the category of varieties over $K$.

Given a $\bVec$-variety $X$, we look at the behaviour of $\oSing(X(V))$,
the singular locus of $X(V)$, as $V$ varies. The following is our main result.

\begin{thm}[Main Theorem]\label{thm:mainintro}
    Let $X$ be a $\bVec$-variety.  Then there exists a unique closed
    $\bVec$-subvariety $Y$ of $X$ such that $Y(V)=\oSing(X(V))$ for
    all $V \in \bVec$ with $\dim(V) \gg 0$. This $Y$ satisfies $Y(V)
    \supset \oSing(X(V))$ for all $V \in \bVec$. 
\end{thm}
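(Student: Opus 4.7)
The plan is to construct $Y$ using the noetherianity of $\bVec$-varieties, and then to identify $Y(V)$ with $\oSing(X(V))$ for $\dim V \gg 0$ via a Jacobian construction.

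Let $\mathcal{F}$ denote the collection of closed $\bVec$-subvarieties $Z \subseteq X$ with $Z(V) \supseteq \oSing(X(V))$ for every $V \in \bVec$. It is nonempty, since $X \in \mathcal{F}$. By the noetherianity of $\bVec$-varieties (established by the third author in earlier work), the intersection $Y := \bigcap_{Z \in \mathcal{F}} Z$ is itself a closed $\bVec$-subvariety of $X$, in fact the unique minimum of $\mathcal{F}$. Since $Y \in \mathcal{F}$, we immediately get $Y(V) \supseteq \oSing(X(V))$ for every $V$, which is the second assertion of the theorem. Uniqueness follows from minimality, once the first assertion is established: any other $\bVec$-subvariety $Y'$ that equals $\oSing(X(V))$ for large $V$ lies in $\mathcal{F}$ (the small-$V$ inclusion being forced by embedding $V$ into a large $W$ and using the $\bVec$-ideal description), so $Y \subseteq Y'$, and conversely $Y' \subseteq Y$ on large $V$ by minimality of $Y$, hence on all $V$.

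The heart of the proof is then the reverse inclusion $Y(V) \subseteq \oSing(X(V))$ for $\dim V \gg 0$. It suffices to exhibit a single $Z^* \in \mathcal{F}$ whose $V$-component equals $\oSing(X(V))$ for all large $V$, as then $Y \subseteq Z^*$ yields the claim. I would construct $Z^*$ via the Jacobian criterion. After reducing to the $\bVec$-irreducible case using the decomposition of a $\bVec$-variety into components, fix finitely many $\bVec$-functorial generators of the ideal of $X$ in $P$, packaged as a $\bVec$-morphism $F \colon P \to Q$. Then $\oSing(X(V))$ is cut out inside $X(V)$ by the $c(V) \times c(V)$ minors of the differential $dF_V$, where $c(V) = \dim P(V) - \dim X(V)$ is (eventually) polynomial in $\dim V$. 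Since $c(V)$ varies with $\dim V$, a single Fitting index does not suffice; instead the varying-index minors must be assembled into one $\bVec$-ideal $\mathcal{J}$ using the Schur-functor decomposition of $dF$ forced by $GL_\infty$-equivariance, and I set $Z^* := V(\mathcal{J})$. For $V$ outside the stable range, $Z^*(V) \supseteq \oSing(X(V))$ will hold because dropping the Jacobian rank below the smaller $c(V)$ is a weaker condition than dropping it below the stable threshold.

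The main obstacle is precisely this assembly step. Because the Fitting index $c(V)$ cutting out $\oSing(X(V))$ grows (typically polynomially) with $\dim V$, a naive minor-based definition yields a $\bVec$-ideal that is either too small (failing to contain $\oSing(X(V))$ for $V$ with $c(V)$ above a fixed cutoff) or too large (adding extraneous points for $V$ with small $c(V)$). The right description must be intrinsic, most naturally phrasing the singular locus as the locus where the $\bVec$-module $\Omega^1_{X/K}$ fails to match its generic $GL_\infty$-equivariant Schur-type profile, a condition that by representation-theoretic stabilisation should be captured by a single $\bVec$-ideal. Making this stabilisation precise, and verifying that the resulting $Z^*$ indeed cuts out $\oSing(X(V))$ for large $V$ rather than some proper closed sublocus, is where I expect the real work to lie.
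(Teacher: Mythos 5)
Your reduction steps (uniqueness, the containment $Y(V)\supset\oSing(X(V))$ for all $V$ via embedding into large $W$, and the reduction to exhibiting one $Z^*$ with $Z^*(V)=\oSing(X(V))$ for large $V$) are sound and roughly parallel the paper's preliminary lemmas. But the heart of the theorem is exactly the construction of $Z^*$, and there your proposal has a genuine gap --- one you partly acknowledge yourself. First, you ``fix finitely many $\bVec$-functorial generators of the ideal of $X$ in $P$''. Topological Noetherianity only provides finitely many equations that cut out $X(V)$ \emph{set-theoretically}; whether their pullbacks generate the radical ideal $I(X(V))$ is an open problem (the paper points this out explicitly, and its Theorem~\ref{thm:LocalEqs} is a partial substitute). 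The Jacobian criterion detects singularity only when applied to generators of the actual ideal, so minors built from set-theoretic equations may vanish at smooth points, and your $Z^*$ could be strictly larger than $\oSing(X(V))$ for all large $V$. Second, even granting ideal generators, the assembly of minors of varying size $c(V)\times c(V)$ into a single $\bVec$-ideal is not carried out: the appeal to a ``generic $GL_\infty$-equivariant Schur-type profile'' of $\Omega^1_{X/K}$ and ``representation-theoretic stabilisation'' is essentially a restatement of the theorem to be proved, not an argument.

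The paper's proof takes a different route that avoids both issues: induction on the ambient polynomial functor $P$ along the well-founded order, using the Embedding Theorem. For an irreducible subfunctor $R$ of the top-degree part, either $X\cong X'\times R$ (and one passes to the smaller $X'\subset P/R$), or, after shifting by some $U$ and localising at a directional derivative $h=\partial f/\partial r$, the variety embeds into a strictly smaller polynomial functor, where the induction hypothesis supplies the singular locus; these loci are then spread out over all of $X$ using $\oGL$-equivariance (the $Y_i$ of Lemma~\ref{lm:Yi}). The residual locus $Z$ where \emph{all} such directional derivatives vanish is shown to lie inside $\oSing(X(V))$ for large $V$ by a dimension count: the codimension of $X(V)$ grows like a degree-$d$ polynomial in $\dim V$, so every maximal minor of the Jacobian must involve top-degree columns, which vanish identically on $Z(V)$. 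Note that this dimension count is the only place a Jacobian appears, and it is used with an inequality, not to cut out the singular locus exactly --- which is precisely how the paper sidesteps the two obstructions above. If you want to pursue your approach, you would first need ideal-theoretic finite generation (open) and then a genuinely new stabilisation argument for the varying Fitting index; as it stands, the proposal does not prove the existence part of the theorem.
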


We will denote this $\bVec$-subvariety $Y$ by $\Xsing$.  In the example
above, $\Xsing(V)$ is the variety of matrices in $V \otimes V$ of rank at
most $r-1$. Note that for all $V$ of dimension at most $r$, $X(V)$ equals
$P(V)$ and is hence smooth. This shows that the inclusion $\Xsing(V)
\supset \oSing(X(V))$ can indeed be strict for small $V$.

\subsection{Local equations for $X$ away from $\Xsing$}

Let $X$ be a closed $\bVec$-subvariety of a polynomial functor $P$. By
topological Noetherianity (see Theorem~\ref{thm:Noetherian} below)
there exists a $U \in \bVec$ such that for all $V \in \bVec$ we have
\[ X(V)=\{p \in P(V) \mid \forall \phi \in \oHom(V,U):
P(\phi)(p) \in X(U)\}. \]
In particular, all pullbacks of defining equations in $K[P(U)]$ for
$X(U)$, along the linear maps $P(\phi)$, together define $X(V)$ as a
set. However,
it is {\em not known} whether the corresponding statement holds at the
ideal-theoretical level: do those pullbacks generate the ideal of $X(V)$
for all $V$, if $U$ is taken sufficiently large? The following result
shows that at least outside singularities these pullbacks do define a
reduced variety.

\begin{thm} \label{thm:LocalEqs}
Let $X$ be a closed $\bVec$-subvariety of a polynomial functor $P$.
Then there exists a $U \in \bVec$ such that for all $V \in \bVec$ the ideal
\[
    I_V \coloneqq (f \circ P(\phi) \mid f \in I(X(U)), \phi \in \oHom(V,U) )
\]
of $K[P(V)]$ defines $X(V)$ as a set, and the reduced locus of $\oSpec(K[P(V)]/I_V)$ contains
$X(V)\setminus \oSing(X(V))$.
\end{thm}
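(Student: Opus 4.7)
The plan is to apply the topological Noetherianity of Theorem~\ref{thm:Noetherian} not to $X$ itself but to the Zariski tangent bundle of $X$, regarded as a $\bVec$-variety.

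First, I would exhibit the tangent bundle of $X$ as a closed $\bVec$-subvariety $TX$ of the polynomial functor $P \oplus P$, by setting
\[ TX(V) := \{(p,v) \in P(V) \oplus P(V) \mid p \in X(V),\ v \in T_p X(V)\}. \]
The key observation is that each $P(\phi) \colon P(V) \to P(W)$ is a linear map, so its differential at every point is $P(\phi)$ itself; hence the induced map on tangent bundles is $(p,v) \mapsto (P(\phi)(p), P(\phi)(v))$, which coincides with $(P \oplus P)(\phi)$ and carries $TX(V)$ into $TX(W)$.

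Next, I would invoke Theorem~\ref{thm:Noetherian} for $TX$ to obtain $U \in \bVec$ such that, for every $V$ and every $(p,v) \in P(V) \oplus P(V)$,
\[ (p,v) \in TX(V) \iff \forall \phi \in \oHom(V,U):\ (P(\phi)(p), P(\phi)(v)) \in TX(U). \]
Specializing to $v=0$ recovers the set-theoretic description of $X(V)$ via pullbacks, so this same $U$ already witnesses the first assertion of the theorem. For $p \in X(V)$, the chain rule combined with the linearity of $P(\phi)$ gives $d(f \circ P(\phi))_p = df_{P(\phi)(p)} \circ P(\phi)$, whence the Zariski tangent space at $p$ of the scheme $\oSpec(K[P(V)]/I_V)$ equals $\bigcap_\phi P(\phi)^{-1}(T_{P(\phi)(p)} X(U))$; by the displayed equivalence, this intersection is exactly $T_p X(V)$.

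Finally, at any smooth point $p \in X(V) \setminus \oSing(X(V))$ one has $\dim T_p X(V) = \dim_p X(V) = \dim_p \oSpec(K[P(V)]/I_V)$, where the last equality holds because the two schemes coincide as sets. The Zariski tangent space of $\oSpec(K[P(V)]/I_V)$ at $p$ therefore has dimension equal to the local Krull dimension, forcing the local ring at $p$ to be regular, hence an integral domain, hence reduced; thus $p$ lies in the reduced locus. The main conceptual step is the construction of $TX$ as a $\bVec$-variety to which Noetherianity can be applied; once that is in hand, the rest of the argument is a formal consequence.
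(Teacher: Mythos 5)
Your argument is correct, but it takes a genuinely different route from the paper's. The paper proves Theorem~\ref{thm:LocalEqs} in tandem with Theorem~\ref{thm:mainintro}, by induction along the well-founded order on polynomial functors: after reducing to $K$ algebraically closed and to irreducible $X$, it splits according to the two cases of the Embedding Theorem, and in case (2) it exhibits an explicit nonvanishing $c \times c$ Jacobian minor at a nonsingular point by combining equations pulled back from the smaller functor $Q_i$ with equations of the form $x_j h_i + r_j$ extracted from the proof of the Embedding Theorem. You instead apply topological Noetherianity once, to the Zariski tangent variety $TX \subset P \oplus P$ (which is indeed a closed $\bVec$-subvariety: each $P(\phi)$ is linear, hence equal to its own differential, so $(P\oplus P)(\phi)$ carries $TX(V)$ into $TX(W)$), and conclude that for the resulting $U$ the differentials of the pulled-back equations cut out exactly $T_pX(V)$ at every $p \in X(V)$; at a nonsingular point this makes the local ring of $\oSpec(K[P(V)]/I_V)$ regular, hence reduced. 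Your route is shorter, independent of the $\Xsing$ machinery, and actually yields slightly more (the tangent-space identity at all points, not only nonsingular ones); what the paper's route buys is that it comes essentially for free with the induction already needed for the Main Theorem and keeps explicit control of which equations realize the Jacobian minor. To make your write-up complete you should add the routine points the paper also handles: verify that $TX(V)$ is closed and take its reduced structure (it is cut out by generators $g_j$ of $I(X(V))$ together with the linear forms $v \mapsto dg_{j,p}(v)$), consolidate the finitely many $U_i$ from Theorem~\ref{thm:Noetherian} into a single $U$, reduce to $K=\ol{K}$ and closed points as in Section~\ref{ssec:reduction-algebraically-closed} (reducedness and the nonsingular locus behave well under this base change in characteristic zero), and pass from closed to arbitrary points of the nonsingular locus by localization.
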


In fact, by the Jacobian criterion the second statement is equivalent to the following. If $p \in X(V)$ is nonsingular and $I_V$ is generated by $f_1,\ldots,f_s$, then the evaluation of some $c \times c$-minor of the Jacobian matrix $(\partial f_i/\partial x_j)$ at $p$ is nonzero, where $c = \dim P(V) - \dim_p X(V)$ and $x_j \in P(V)^*$.

\subsection{Semi-functorial resolution of singularities}
Let $X$ be a $\bVec$-variety. By functoriality, for every $V \in
\bVec$, we have an algebraic group action of $\oGL(V)$ on $X(V)$.
Hironaka's theorem guarantees that for every $V\in \bVec$,
there exists a resolution of singularities of $X(V)$. In fact, for each $V\in
\bVec$ such a resolution can be chosen to be $\oGL(V)$-equivariant (see e.g.~\cite{villamayor}), in the sense that $\oGL(V)$
acts algebraically on the resolution space and that the resolution map is
$\oGL(V)$-equivariant.
An interesting question is whether it is possible
to find a functorial way to assign to each vector space $V\in \bVec$
a resolution of singularities $\Omega(V)$ of $X(V)$ that is functorial
in $V$.  Some lead comes from the following example.

\begin{xmp}\label{xmp:resolutionsymmetric}
    For every $V \in \bVec$, denote with $S^2(V)$ the vector subspace
    of $V \otimes V$ of symmetric tensors (matrices), and observe that for every linear map $\phi: V \to V'$ we get a map $S^2(\phi): S^2(V) \to S^2(V')$ given as the restriction to $S^2(V)$ of the map $\phi \otimes \phi$.
    Let $X$ be the $\bVec$-variety inside $S^2$ of symmetric matrices of rank at most $r$.
    Then
    \[\Omega(V) \coloneqq \{(p, U)\mid p \in U \otimes U\} \subset S^2(V)\times \oGr(r, V)\]
    together with the projection $\pi_V$ on the first coordinate is a $\oGL(V)$-equivariant resolution of singularities of $X(V)$ for any vector space $V$ of dimension bigger than $r$ \cite[page 189]{weyman}.
    Moreover, for every injective linear map $\phi$ from $V$ to $V'$, we get a map $\Omega(\phi)$ from $\Omega(V)$ to $\Omega(V')$ defined by sending $(p, U)$ to $(S^2_\phi(p), \phi(U))$, and the following diagram commutes:
    \begin{center}
        \begin{tikzcd}
            \Omega(V) \arrow[r, "\Omega(\phi)"] \arrow[d, "\pi_V"] & \Omega(V') \arrow[d, "\pi_{V'}"]\\
            X(V) \arrow[r, "S^2(\phi)"] & X(V').
        \end{tikzcd}
    \end{center}
\end{xmp}

The above example scales back expectations.  First, we need to work
with Grassmannians that behave functorially only with respect to 
injective linear maps $\phi: V \to V'$.  Second, the dimension of $X(V)$
of Example~\ref{xmp:resolutionsymmetric} grows linearly in $\dim(V)$, so this is a resolution for one of the ``smallest''
$\bVec$-varieties only. Third, the variety $\Omega(V)$ is not an affine
variety, so even if it were functorial in $V$, it would not be a
$\bVec$-variety in our sense.

We plan to return to these issues in future work on not necessarily
affine varieties equipped with an action of the infinite general linear
group $\oGL$. The hope is that that setting is broad enough to allow for
$\oGL$-equivariant resolutions of singularities.

In the current paper, we will restrict ourselves to the following
situation. We say that a $\bVec$-variety $X$ is of {\em linear type}
if $\dim(X(V))$ is eventually a linear function of $\dim(V)$; see
Section~\ref{sec:lineartype} for details. Roughly speaking, these are
the varieties that can be parameterised by finitely many vectors.
The varieties in Examples~\ref{xmp:singularlocusboundedmatrices}
and~\ref{xmp:resolutionsymmetric} are of linear type. In
Section~\ref{sec:weakresolution} we construct an assignment $\Omega$
from $\bVec$ to finite-type schemes over $K$ that behaves functorially
with respect to suitable linear maps.
We further construct a transformation $\pi$ with the property that for every $V \in
\bVec$ the map $\pi_V: \Omega(V) \to X(V)$ is a {\em weak} resolution
of singularities such that for most linear maps $\phi: V \to V'$
the following diagram commutes:

\begin{center}
    \begin{tikzcd}
        \Omega(V) \arrow[r, dashed, "\Omega(\phi)"] \arrow[d, "\pi_V"] & \Omega(V') \arrow[d, "\pi_{V'}"]\\
        X(V) \arrow[r, "X(\phi)"] & X(V').    
    \end{tikzcd}
\end{center}

The adjective ``weak'' refers to the fact that the map $\pi_V$ is not in general birational on the whole smooth locus of $X(V)$: smooth points can have strictly positive-dimensional fibres.

\subsection{Relation to the existing literature}

Infinite-dimensional and $\oGL$-equivariant algebraic geometry
have recently gained much attention on account of their applications
to problems in commutative algebra such as Stillman's conjecture
\cite[Problem~3.14]{peeva-stillman:problems}. This conjecture was
first proved by Ananyan-Hochster \cite{Ananyan16} and later proved and
generalised using infinite-dimensional methods by Erman-Sam-Snowden
\cite{Erman18,Erman17b}; see \cite{Erman19} for an introduction to these
ideas. Similar ideas have found application in tensor decomposition.
For instance, Karam's insight that many notions of bounded rank can be
detected in small subtensors \cite{Karam22} was made more effective
for partition rank using ideas from infinite-dimensional geometry
\cite{Draisma23a}; \cite{bik-draisma-eggermont-snowden:uniformitylimits}
shows that tensors in the boundary of a tensor (de-)composition map can
be approximated in a uniform manner; and $\oGL$-equivariant geometry
also plays a crucial role in many of the open problems on tensors in
\cite{gesmundo:geometry}.

These developments have led to a systematic study of
infinite-dimensional varieties with an action of the infinite
general linear group; see \cite{bik-draisma-eggermont-snowden} in
characteristic zero and \cite{Bik24} in positive characteristic. The
general question is whether behaviour of finite-dimensional
algebraic varieties persists for infinite-dimensional ones if one
imposes $\oGL$-symmetry.  Successes were obtained for topological
Noetherianity \cite{draisma, bik-danelon-draisma:topologicalN},
Chevalley's theorem \cite{bik, bik-draisma-eggermont-snowden},
and ring-theoretic Noetherianity certain polynomial representations
of degree $\leq 2$ \cite{sam-snowden:GLmodulesoverpolyringsininfvars,
sam-snowden:GLmodulesoverpolyringsininfvarsII,nagpal-sam-snowden,sam-snowden:spequivariant}.
Excitingly, ring-theoretic Noetherianity was recently disproved in
positive characteristic \cite{Ganapathy24}; in characteristic zero it
remains open.

Theorem~\ref{thm:mainintro} fits in this line of research. It concerns
the singular loci of varieties depending functorially on the choice of a
vector space $V$.  We are certainly not the first to study the singular
loci of such varieties; here is a well-known example from the literature.

\begin{xmp}
Fix a positive integer $d \geq 3$, let $P$ be the polynomial functor $V
\mapsto V^{\otimes d}$, and let $Y$ be the closed $\bVec$-variety in $P$
defined by
\[ Y(V)=\{v_1 \otimes \cdots \otimes v_d \mid v_i \in V\}. \]
Define $X$ as the first secant variety of $Y$, i.e.,
\[ X(V)=\overline{\{p+q \mid p,q \in Y(V)\}}. \]
So $Y$ is the $\bVec$-variety of tensors of rank $\leq 1$, and $X$
is the $\bVec$-variety of tensors of border rank $\leq 2$. Then,
by \cite[Corollary 7.17]{Michalek15}, $\Xsing$ has $\binom{d}{2}$
irreducible components, namely,
\[ X_{12}(V)=\{A_{12} \otimes v_3 \otimes \cdots \otimes v_d \mid
A_{12} \in V \otimes V \text{ of rank } \leq 2 \text{ and } v_3,\ldots,v_d \in 
V\} \]
and its orbit under the symmetric group on $d$ letters. Note that here,
too, $\Xsing(V)$ is strictly larger than $\oSing(X(V))$ if $V$ is too
small. For instance, if $d=3$, then $X(K^2)$ is all of $K^2 \otimes
K^2 \otimes K^2$ and hence smooth, while $\Xsing(K^2)$ is nonempty. The results from \cite{Michalek15} were generalised from Segre
varieties to Segre-Veronese varieties in 
\cite{khadam-michalex-zwiernik:secant}. 
\end{xmp}

Our Main Theorem establishes the existence of such a uniform description
of the singular locus for any $\bVec$-variety. 
Furthermore, follow-up work
by Chiu-Danelon-Snowden relates our result to the more subtle notions
of singularity and smoothness in the realm of infinite-dimensional
varieties \cite{CDS}.

\subsection{Notation}
\begin{itemize}
    \item $\bbN$ denotes the set of natural numbers $\{0, 1, 2, \dots\}$,
    \item for $n \in \bbN$ we write $[n]\coloneqq \{1,\ldots,n\}$; in particular,
    $[0]=\emptyset$,
    \item $K$ denotes a field of characteristic zero,
    \item $\oGL_n$ is the general linear group on $K^n$, regarded as an algebraic group,
    \item $\bbA^n$ denotes the affine space over $K$ of dimension $n$,
    \item $\bbP^n$ denotes the projective space over $K$ of dimension $n$,
    \item a {\em variety} is a reduced affine scheme of finite type over $K$,
    \item $\oGr(d, V)$ denotes the Grassmannian of $d$-dimensional linear subspaces of the vector space $V$,
    \item $K[X]$ denotes the coordinate ring of the variety $X$,
    \item $\bSch$ denotes the category of schemes over $K$,
    \item $\bVec$ denotes the category of finite-dimensional vector spaces over $K$, and for another field $L$ we denote with $\bVec_L$ the category of finite-dimensional vector spaces over $K$.
\end{itemize}

\section{Background}\label{sec:Background}

\subsection{Polynomial functors and $\bVec$-varieties}

Let $K$ be a field of characteristic zero and recall that $\bVec$ is the category of finite-dimensional $K$-vector spaces with $K$-linear maps.
Let $V,W \in \bVec$. A {\em polynomial map}
from $V$ to $W$ is an element of $\oSym_K^\bullet(V^*)\otimes W$,
where $\oSym_K^\bullet(V^*)$ denotes the symmetric $K$-algebra on the dual space of $V$.

More concretely, let $\{x_1, \dots, x_n\}$ be the dual of a basis $\{v_1, \dots, v_n\}$ of $V$, and let $\{w_1, \dots, w_m\}$ be a basis for $W$.
An element $f$ of $\oSym^\bullet_K(V^*)\otimes W$ is of the form:
\begin{displaymath}
    f \coloneqq \sum_{i = 1}^m p_{i}(x_1, \dots, x_n) \otimes w_i,
\end{displaymath}
where the $p_i$ are polynomials in $K[x_1, \dots, x_n]$.
Then $f$ defines the (polynomial) map $V \to W$ mapping the vector $v = \sum_{j=1}^n \lambda_jv_j$ with $\lambda_i\in K$ to the vector $\sum_{i = 1}^m p_i(\lambda_1, \dots, \lambda_n)w_i$.
The {\em degree} of $f$ is the maximum among the degrees of the $p_i$ (and $-\infty$ if $f=0$) and $f$ is said {\em homogeneous of degree $d$} if all the $p_i$ are homogeneous of degree $d$.
As we are working over an infinite field, the assignment $\oSym^\bullet_K(V^*) \otimes W \to \operatorname{Map}(V,W)$---the maps from $V$ to $W$---is injective.

\begin{dfn}
A {\em polynomial functor} is a functor $P$ from the category $\bVec$
to itself such that for all vector spaces $U,V \in \bVec$ the assignment
\begin{displaymath}
 P_{U,V}: \oHom(U,V) \to \oHom(P(U), P(V))
\end{displaymath}
is a polynomial map whose degree is bounded from above by some element in
$\{-\infty,0,1,\ldots\}$ independent of $U,V$; the smallest such element
is called the {\em degree} of $P$ and denoted $\deg(P)$.  If for every
$U,V$ the map $P_{U,V}$ is homogeneous of a fixed degree $d$, we say
that the polynomial functor $P$ is {\em homogeneous of degree $d$}.
To simplify notation we write $P(\phi)$ instead of $P_{U,V}(\phi)$
when the domain $U$ and the codomain $V$ of a map $\phi$ are clear from
the context.
\end{dfn}

Many texts on polynomial functors allow that the degree of $P_{U,V}$
goes to infinity for $\dim(U),\dim(V)$ tending to infinity, so that
for instance
\[ V \mapsto \bigwedge^1 V  \oplus \bigwedge^2 V \oplus \cdots \]
is a polynomial functor. But we are only interested in polynomial functors
for which this degree can be bounded uniformly in $U,V$,
as required in the definition above.

\begin{dfn}
Let $P$ and $Q$ be polynomial functors.
A {\em polynomial transformation} $\alpha: P \to Q$ of polynomial functors is given by a polynomial map $\alpha_V : P(V) \to Q(V)$ for every $V \in \bVec$ such that for every $\phi \in \oHom(U,V)$ the diagram
\begin{center}
    \begin{tikzcd}
    P(U) \arrow[d, "P(\phi)" swap] \arrow[r, "\alpha_U"] & Q(U) \arrow[d, "Q(\phi)"] \\
    P(V) \arrow[r, "\alpha_V"] & Q(V)
    \end{tikzcd}
\end{center}
commutes.
\end{dfn}

\begin{xmp} \label{xmp:slicerank}
Many varieties of tensors are (unions of) images (or image closures)
of polynomial transformations. For instance, consider the functors $Q:V \mapsto V^{\otimes d}$ and $P:V \mapsto (V \oplus V^{\otimes d-1})^r$. Fix an $r$-tuple $(i_1,\ldots,i_r) \in [d]^r$, and consider the polynomial transformation $\alpha:P \to Q$ given by 
\[ \alpha_V((v_1,T_1),\ldots,(v_r,T_r))\coloneqq \sigma_1(v_1 \otimes T_1) + \cdots +
\sigma_r(v_r \otimes T_r) \]
where $\sigma_k :V^{\otimes d} \to V^{\otimes d}$ is the linear map
induced by
\[ u_1 \otimes \cdots \otimes u_d \mapsto 
u_2 \otimes \cdots \otimes u_{i_k} \otimes u_1 \otimes u_{i_k+1}
\otimes \cdots \otimes u_d.\]
A tensor in the image of $\alpha$ is a sum of $r$ tensors that factor
as a vector in one copy of $V$ times a tensor in the remaining
$(d-1)$-fold tensor power of $V$. Taking the union over all $r$-tuples
$(i_1,\ldots,i_d)$, we obtain the variety of tensors with {\em slice
rank} at most $r$ \cite{Tao16}.
\end{xmp}

\begin{rmk}
Polynomial functors with homogeneous polynomial transformations of
degree one form an Abelian category $\bPF$. On the other hand, the
category $\bPF^{\text{pol}}$ of polynomial functors equipped with
all polynomial transformations is not Abelian; e.g., the image of such a
transformation may no longer be a polynomial functor.
For us, a {\em subfunctor} $Q$ of a polynomial functor $P$ is a subobject of $P$ in the category $\bPF$.
\end{rmk}

\begin{rmk}\label{rmk:sumofhomogeneous}
In $\bPF$, a polynomial functor $P$ is the direct sum of its {\em
homogeneous components} defined by 
\begin{displaymath}
 P_i(V) \coloneqq \{\,p \in P(V) : P(\lambda \oid_V)(p) = \lambda^i p \text{
 for every } \lambda \in K\,\}.
\end{displaymath}
Indeed, each $P_i$ is a homogeneous polynomial functor of degree $i$, 
and we have $P = \bigoplus_{i \geq 0} P_{i}$, where only finitely many of the $P_i$
are nonzero (\cite[Section~2]{friedlander-suslin}). We note that $P_0$ is a constant polynomial functor, which assigns a fixed vector space $P(0) \in \bVec$ to all $V \in \bVec$ and the identity map to each linear map.
We call $P$ {\em pure} if $P_0=\{0\}$.
\end{rmk}

\begin{dfn}\label{dfn:irreducibilityGL}
    We say that a polynomial functor $P$ is {\em irreducible} if $P$
    has precisely one nonzero subobject in $\bPF$ (namely, $P$
    itself).
\end{dfn}

\begin{rmk}\label{dfn:irreducibilityPF}
    A polynomial functor $P$ is irreducible if and only if for every
    $V \in \bVec$ we have that $P(V)$ is either zero or an irreducible
    $\oGL(V)$-representation; this follows from 
    \cite[Lemma~3.4]{friedlander-suslin}.
\end{rmk}

\subsubsection{A well-founded order}\label{sssec:orderpolyfun}
(Isomorphism classes of) polynomial functors are partially
ordered by the relation $\prec$ defined by $Q \prec P$ if $Q \not
\cong P$ and for the largest $e$ with $Q_e \not \cong P_e$ the former
is a quotient of the latter.  This partial order is well-founded; see
\cite[Lemma 12]{draisma} for a proof.

\subsubsection{Connection to Schur functors}
We refer to \cite{fulton-harris:reprfirstcourse} for background on
Schur functors. 
A {\em partition} $\lambda$ of {\em length $n$} is a sequence $(\lambda_1, \lambda_2, \dots, \lambda_n)$ of integers such that $\lambda_i \geq \lambda_{i+1}>0$.
It corresponds to the Young diagram where the $i$-th row has $\lambda_i$ boxes.
To each $\lambda$ one associates an irreducible representation $U_\lambda$ of the symmetric group $\oSym([d])$ with $d=|\lambda|\coloneqq \sum_i \lambda_i$. The {\em Schur functor} $\bbS_\lambda$ is then defined as $\bbS_\lambda(V) \coloneqq \oHom_{\oSym([d])}(U_\lambda,V^{\otimes d})$. 

\begin{xmp}\label{xmp:skewsymmetricpower}
    If $\lambda$ is the partition $(1, \dots, 1)$ of length $d$, then
    $\bbS_\lambda = \Lambda^d$, the $d$-th skew-symmetric power. At the
    other extreme, if $\lambda$ is the partition $(d)$ of length $1$,
    then $\bbS_\lambda = S^d$, the $d$-th symmetric power. General
    Schur functors are constructed via a combination of symmetrisation
    and skew-symmetrisation. 
\end{xmp}

\begin{rmk}
    Since $K$ has characteristic zero, the Schur functors are the simple
    objects in $\bPF$, and moreover every polynomial functor is a finite direct
    sum of Schur functors. This follows from the corresponding statement for polynomial representations of $\oGL_n$ and \cite[Lemma 3.4]{friedlander-suslin}.
\end{rmk}

\subsubsection{Shifting}\label{sssec:shiftoperation}
\begin{dfn}
Given a finite-dimensional vector space $U$, we define the \textit{shift functor} $\oSh_U : \bVec \to \bVec$ by assigning to each $V \in \bVec$ the vector space $U \oplus V$, and to each map $\phi \in \oHom(V, W)$ the map $\oid_U \oplus \phi$. For a polynomial functor $P$, we denote with $\oSh_UP$ the composition $P\circ \oSh_U$ and call it 
the {\em shift of $P$ by $U$.} This is a polynomial functor  assigning to each $V \in \bVec$ the vector space $P(U \oplus V)$, and to a morphism $\phi \in \oHom(V, W)$ the morphism $P(\oid_U \oplus \phi)$.
\end{dfn}

\begin{rmk}\label{rmk:shiftissmaller}
    Let $P$ be a polynomial functor. Then
\begin{displaymath}
    \oSh_U P \cong P \oplus P'
\end{displaymath}
with $P'$ a polynomial functor of strictly lower degree than $P$ (note that
$P'\cong (\oSh_U P)/P$). Indeed, the natural transformation 
$P \to \oSh_U P$ given by 
\[ P(0_{V \to U} \oplus \id_V): P(V) \to P(U \oplus V) \] 
has
as a left inverse the natural transformation $\oSh_U P \to P$ given by
\[ P(0_{U \to V} \oplus \id_V):P(U \oplus V) \to P(V); \] 
and for the
statement about the degree see \cite[Lemma 14]{draisma}.
\end{rmk}

\begin{rmk}\label{rmk:boundedlength}
    Let $\lambda$ be a partition, $U,V$ finite-dimensional vector spaces, and consider the Schur functor $\bbS_\lambda$.
    Then we have:
    \[
    (\oSh_U \bbS_{\lambda})(V)=\bbS_{\lambda}(U \oplus V) = \bigoplus_{\mu,\nu}\left (\bbS_\mu
    (U) \otimes \bbS_\nu(V) \right )^{ N_{\mu \nu\lambda} },
    \]
    where the direct sum ranges over all partitions $\mu$ and $\nu$,
    and the symbol $N_{\mu \nu \lambda}$ is the Littlewood-Richardson
    coefficient: the number of copies of $\bbS_\lambda$ in $\bbS_\mu
    \otimes \bbS_\nu$ \cite[Exercise~6.11]{fulton-harris:reprfirstcourse}.
    We will use later that $N_{\mu\nu\lambda}$ can be nonzero only when $|\lambda|=|\mu|+|\nu|$ and moreover the lengths of $\mu$ and $\nu$ are at most the length of
    $\lambda$.
    Furthermore, we have $N_{() \lambda \lambda}=1$, in accordance
    with Remark~\ref{rmk:shiftissmaller}.
\end{rmk}

\subsubsection{$\bVec$-varieties}

Let $\bSch$ denote the category of schemes over $K$.  A polynomial functor $P$ gives rise to a functor
$\bVec \to \bSch$ that sends $V$ to the spectrum of the symmetric algebra
on $P(V)^*$. We write $P$ for this functor $\bVec \to \bSch$, as well,
and we write $K[P(V)]$ for said symmetric algebra.

\begin{rmk}
    \label{rmk:base-change-pol-functor}
    For every $U,V \in \bVec$ the polynomial map
    \[
        P_{U,V} \colon \oHom(U,V) \to \oHom(P(U),P(V))
    \]
    extends naturally to a morphism of affine schemes, which we will still denote by $P_{U,V}$. If $R$ is any $K$-algebra, then on $R$-points $P_{U,V}$ is given by a map
    \[
        P_{U,V}^R \colon \oHom_{\bMod_R}(U \otimes_K R, V \otimes_K R) \to \oHom_{\bMod_R}(P(U)\otimes_K R, P(V) \otimes_K R).
    \]
    Via the embedding $M \in \bMod_R \mapsto
    \oSpec(\oSym_R^\bullet(M^*)) \in \bSch$ maps on the right hand side
    are identified with $R$-morphisms $P(U) \times_K \oSpec R \to P(V)
    \times_K \oSpec R$.
\end{rmk}

\begin{dfn}
Let $X, Y: \bVec \to \bSch$ be functors.
Let $\alpha: X \to Y$ be a natural transformation.
We say that $\alpha$ is a {\em closed embedding} if $\alpha_V: X(V) \to Y(V)$ is a closed embedding for every $V \in \bVec$.
We say that a functor $X: \bVec \to \bSch$ admitting a closed
embedding $\alpha: X \to P$ in some polynomial functor $P$ is an {\em affine $\bVec$-scheme}.
The category of affine $\bVec$-schemes is the full subcategory  (whose objects are affine $\bVec$-schemes) in the functor category $\bSch^\bVec$.
\end{dfn}

The following is our main object of study: it is our notion of variety in polynomial functors.\label{dfn:Vecvariety}
\begin{dfn}
Let $X$ be an affine $\bVec$-scheme.
If $X(V)$ is a reduced affine scheme for every $V\in \bVec$, then we say that $X$ is a {\em $\bVec$-variety}.
\end{dfn}
We can think of a $\bVec$-variety $X$ in $P$ as a functor $X: \bVec
\to \bSch$ such that $X(V)\subset P(V)$ is a closed subvariety and $X(\phi) = P(\phi)_{|_{X(U)}}$ for every $\phi \in \oHom(U,V)$.

Let $X$ and $Y$ be $\bVec$-varieties.
A {\em morphism} of $\bVec$-varieties is a natural transformation
$\alpha: X \to Y$, i.e.~given by the data of a morphism $\alpha_V:
X(V) \to Y(V)$ of affine varieties for every $V \in \bVec$ such that for every $\phi \in \oHom(U,V)$ we have $Y(\phi)\circ \alpha_U = \alpha_V \circ X(\phi)$.
The category of $\bVec$-varieties is the full subcategory, in the category of affine $\bVec$-schemes, whose objects are $\bVec$-varieties.
The following proposition is immediate.

\begin{prp} \label{prp:Easy}
Let $X$ be a closed $\bVec$-variety in a polynomial
functor $P$ and let $\phi \in \oHom(V,W)$. Then:
\begin{enumerate}
\item if $\phi$ is injective, then $P(\phi)$ restricts to a
closed embedding $X(V) \to X(W)$;
\item if $\phi$ is surjective, then $P(\phi)$ restricts to a
surjective morphism $X(V) \to X(W)$; and 
\item if $V=W$ and $\phi$ is a linear isomorphism, then $P(\phi)$
restricts to an automorphism $X(V) \to X(V)$---and indeed, the map $\oGL(V)
\times X(V) \to X(V), (\phi,p) \mapsto P(\phi)p$ is an algebraic group
action.
\end{enumerate}
\end{prp}
\begin{proof}
We prove the first item; the rest is proved in a similar fashion. If
$\phi$ is injective, then let $\psi \in \oHom(W,V)$ be such that
$\psi \circ \phi=\oid_V$. Then $P(\psi) \circ P(\phi)=P(\psi \circ
\phi)=P(\oid_V)=\oid_{P(V)}$ by functoriality. It follows that $P(\phi)$
is an injective linear map, hence defines a closed embedding $P(V)
\to P(W)$, and this restricts to a closed embedding $X(V)
\to X(W)$.
\end{proof}

\begin{rmk}
    \label{rmk:extendingbasefield}
    Let $P$ be a polynomial functor over $K$ and $L$ be a field extension of $K$. Following \cite[Section 2.8]{draisma}, we construct a polynomial functor $P_L \colon \bVec_L \to \bVec_L$ as follows. For each $U \in \bVec_L$ we make a choice of $V_U \in \bVec_K$ and isomorphism $\psi_U \colon U \to V_U \otimes_K L$ in $\bVec_L$. In particular, if $U = V \otimes_K L$ for $V\in \bVec_K$ we may choose $U_V = V$ and $\psi_U = \id$. Then we set $P_L(U) \coloneqq P(V_U) \otimes_K L$. 
    Moreover, for $\widetilde{\varphi} \in \oHom_{\bVec_L}(U',U)$ we define
    \[
        P_L(\widetilde{\varphi}) \coloneqq P_{V_{U'},V_U}^L(\psi_U \circ \widetilde{\varphi} \circ \psi_{U'}^{-1}),
    \]
    see Remark~\ref{rmk:base-change-pol-functor}. Note that for $\varphi \in \oHom_{\bVec_K}(V',V)$, by our choice above, we have that $P_L(\varphi \otimes_K L) = P(\varphi) \otimes_K L$.

    Thinking of $P$ as a functor $\bVec_K \to \bSch_K$, for every $\varphi \in \oHom_{\bVec_K}(V',V)$ we obtain the following diagram
    \[
        \begin{tikzcd}[column sep=huge]
            P_L(V' \otimes_K L) = P(V') \times_K \oSpec L \arrow[d] \arrow[r, "P(\varphi) \times_K \oSpec L"] & P(V) \times_K \oSpec L = P_L(V \otimes_K L) \arrow[d] \\
        P(V') \arrow[r, "P(\varphi)"] & P(V).
        \end{tikzcd}
    \]
    If $X$ is a $\bVec$-subvariety of $P$, then we define the base change $X_L$ as a $\bVec_L$-subvariety of $P_L$ via $X_L(U) \coloneqq X(V_U) \times_K \oSpec L$, which comes with a closed immersion into $P_L(U)$.
\end{rmk}

\begin{rmk}
    \label{rmk:image-of-hom}
    If $X$ is a $\bVec$-subvariety of $P$ and $V,W \in \bVec$, then we have a morphism of schemes
    \[
        \oHom(V,W) \times X(V) \to X(W).
    \]
    Indeed, by Remark~\ref{rmk:base-change-pol-functor}, for any $K$-algebra $R$ this morphism is given on $R$-points as
    \[
        \oHom_{\bMod_R}(V\otimes_K R,W \otimes_K R) \times X(V)(R) \to X(W)(R),\: (\varphi,x) \mapsto P_{V,W}^R(\varphi)(x).
    \]
    In particular, for $R = L$ a field extension of $K$, the morphism maps $(\varphi,x)$ to $P_L(\varphi)(x)$.
\end{rmk}

\begin{lm}
    \label{lm:intersection-subspace}
    Let $X$ be a $\bVec$-subvariety of a polynomial functor $P$ and $V\in \bVec$. 
    \begin{enumerate}
        \item If $U$ is a subspace of $V$, then $X(U) = X(V) \times_{P(V)} P(U)$.
        \item If $U_1,U_2$ are subspaces of $V$, then
            \[
                X(U_1\cap U_2) = X(U_1) \times_{X(V)} X(U_2) = X(U_1) \times_{P(V)} P(U_2) = P(U_1) \times_{P(V)} X(U_2).
            \]
    \end{enumerate}
\end{lm}

\begin{proof}
    For the first assertion, let $\iota_{U,V} \colon U \to V$ be the
    inclusion and consider a diagram in which the two squares with
    solid arrows commute:
    \[
        \begin{tikzcd}[column sep = large]
            Y \arrow[rrd, bend left,"g_{P(U)}"] \arrow[rdd, bend right,"g_{X(V)}"']  \arrow[rd, dashed, "g"]& & \\
             & X(U) \arrow[d, "X(\iota_{U,V})"'] \arrow[r, "\iota_{X(U)}"] & P(U) \arrow[d, "P(\iota_{U,V})"]\\
             & X(V) \arrow[r, "\iota_{X(V)}"'] & P(V), 
        \end{tikzcd}
    \]
    where we have to show the existence of a unique $g$ making the
    two triangles commute. Choose a retraction $\pi_{U,V} \colon V \to U$
    and set $g \coloneqq X(\pi_{U,V}) \circ g_{X(V)}$. Then
    \begin{multline*}
        \iota_{X(V)} \circ X(\iota_{U,V}) \circ g  = P(\iota_{U,V}) \circ \iota_{X(U)} \circ X(\pi_{U,V}) \circ g_{X(V)} = P(\iota_{U,V}) \circ P(\pi_{U,V}) \circ \iota_{X(V)} \circ g_{X(V)} \\
         = P(\iota_{U,V}) \circ P(\pi_{U,V}) \circ P(\iota_{U,V}) \circ g_{P(U)} = P(\iota_{U,V}) \circ g_{P(U)} = \iota_{X(V)} \circ g_{X(V)}.
    \end{multline*}    
    and since $\iota_{X(V)}$ is a monomorphism we get that $g_{X(V)} = X(\iota_{U,V}) \circ g$. Furthermore
    \[ \iota_{X(U)} \circ g = \iota_{X(U)} \circ X(\pi_{U,V}) \circ
    g_{X(V)}=P(\pi_{U,V}) \circ \iota_{X(V)} \circ g_{X(V)}
    =P(\pi_{U,V}) \circ P(\iota_{U,V}) \circ g_{P(U)}
    =g_{P(U)}, \]
    as desired.  
    For the second assertion, note that by (1) we only have to prove the first equality. For that, we proceed similarly as before. We write $W \coloneqq U_1 \cap U_2$ and $\iota_{U_i,V}$, $\iota_{W,U_i}$ and $\iota_{W,V}$ for the inclusions in $\bVec$. Now choose retractions $\pi_{U_i, V}$, $\pi_{W,U_i}$ and $\pi_{W,V}$ such that $\pi_{W,V} = \pi_{W,U_1} \circ \pi_{U_1,V} = \pi_{W,U_2} \circ \pi_{U_2,V}$ and
    \[
        (\iota_{U_i,V} \circ \pi_{U_i,V}) \circ (\iota_{U_j,V} \circ \pi_{U_j,V}) = \iota_{W,V} \circ \pi_{W,V}, \: i\neq j.
    \]
    Let $Y \in \bSch$ and $g_{U_i} \colon Y \to X(U_i)$ a pair of
    morphisms satisfying $X(\iota_{U_1,V}) \circ g_{U_1} =
    X(\iota_{U_2,V}) \circ g_{U_2}$. Then $g \coloneqq X(\pi_{W,U_1})
    \circ g_{U_1} = X(\pi_{W,U_2}) \circ g_{U_2}$ defines the unique morphism $Y \to X(W)$ making the corresponding diagram commutative. 
\end{proof}

As an immediate consequence we get the following. Note that, if $p$ is an $L$-point of $X$ and $L'$ a field extension of $L$, then we get a natural lift of $p$ to an $L'$-point of $X \times_L \oSpec L'$, which we still denote by $p$.

\begin{lm}
    \label{lm:minimaldimension}
    Let $X$ be a $\bVec$-variety, let $V \in \bVec$, and let $L$ be an
    extension field of $K$. Then for any $L$-valued point $p$ of $X(V)$
    there exists a unique minimal $L$-subspace $U$ of $V \otimes_K L$
    with $p \in X_L(U)$.
\end{lm}

We may, in particular, take an arbitrary scheme-theoretic point $p$ of
$X(V)$ and take $L$ to be its residue field. For this particular choice
of $L$ we write $U_p \subset V \otimes_K L$ for the unique minimal subspace $U$
in the lemma.

\begin{proof}
Clearly there exist $L$-subspaces $U$ with $p \in X_L(U)$, such as
$U=V \otimes_K L$. If $U_1,U_2$ are two such subspaces, then $p$ gives rise
to morphisms $\oSpec(L) \to X_L(U_i)$ such that a square diagram
with maps $X_L(U_i) \to X_L(V \otimes L)$ commutes. By 
Lemma~\ref{lm:intersection-subspace}, part (2) applied to $X_L$, we then
find that $p \in X_L(U_1 \cap U_2)$. This implies that there is a unique
minimal $U$ with $p \in X_L(U)$.
\end{proof}

\begin{xmp}
    Let $X$ be the $\bVec$-subvariety of $S^2$ of quadratic forms of
    rank at most $1$. For $V = K^2$ we have $P(V) = \langle e_1^2,
    e_1 e_2, e_2^2 \rangle$ with corresponding coordinates $a,b,c$,
    and $X(V)$ is the irreducible hypersurface defined by the equation
    $b^2-4ac=0$. The generic point $\eta$ of $X(V)$ has as residue field
    $L$ the fraction field of $K[a,b,c]/(b^2-4ac)$, and 
    \[ \eta=a e_1^2 + b e_1 e_2 + c e_2^2 = \frac{1}{4a} (2ae_1 + b e_2)^2. \]
    So $U_\eta$ is the one-dimensional subspace of $V \otimes L$
    spanned by $2ae_1 + b e_2$. 
\end{xmp}

An important consequence of working in characteristic zero is the
following.

\begin{lm}
In the setting of Lemma~\ref{lm:minimaldimension}, if $U_{p,L}$ is
the minimal subspace promised by that Lemma, $L'$ is an extension
field of $L$, and $U_{p,L'} \subseteq V \otimes L'$ is the minimal
subspace promised by that lemma when regarding $p$ as an $L'$-point,
then $U_{p,L'}=U_{p,L} \otimes_L L'$.
\end{lm}

\begin{proof}
By Lemma~\ref{lm:intersection-subspace}, it suffices to prove this for
$X=P$. First consider the case where $P(V)=V^{\otimes d}$. Then for
every $i=1,\ldots,d$, $p$ can be regarded as an $L$-linear map $p_i:((V
\otimes_K L)^*)^{\otimes [d] \setminus \{i\}} \to V \otimes_K L$,
and $U_{p,L}$ is the sum of the images of all these linear maps. This
clearly has the property that $U_{p,L'}=U_{p,L} \otimes_L L'$.

Next we claim that a polynomial functor $P=P_1 \oplus P_2$
satisfies the statement of the lemma if and only if $P_1$ and $P_2$
do. Indeed, for ``if'' one verifies that for $p=(p_1,p_2)$ we have
$U_{p,L}=U_{p_1,L}+U_{p_2,L}$, and for ``only if'' one verifies that
$U_{p_1,L}=U_{(p_1,0),L}$ and $U_{p_2,L}=U_{(0,p_2),L}$. 

Since any polynomial functor is a direct sum of irreducible summands
(Schur functors) of functors of the form $V \mapsto V^{\otimes d}$,
the lemma follows. 
\end{proof}

\begin{rmk}
We warn the reader that the lemma is not true
in characteristic $q$. For instance, take $K=L=\FF_q(t)$ and let $L'$
be an algebraic closure of $K$. Then the the element $p=e_1^q +
t e_2^q \in S^q(K^2)$ would have $U_{p,L}=\langle e_1,e_2 \rangle$,
while $U_{p,L'}=\langle e_1 + t^{1/q} e_2 \rangle$.
\end{rmk}

Let $P$ be a polynomial functor and $V \in \bVec$ of dimension $n$.
For $d\leq n$ let $e \coloneqq \dim P(K^d)$ (see also
Section~\ref{sssec:dimension}). This data gives rise to a morphism $\oGr(d,V) \to \oGr(e,P(V))$. Indeed, on any standard affine open of $\oGr(d,V)$, elements are represented by $n \times d$-matrices and get mapped to a standard affine open of $\oGr(e,P(V))$ via the (polynomial) map
\[
    \oHom(K^d,K^n) \to \oHom(P(K^d),P(K^n)).
\]
Write $\Gamma_e(W) \subset W \times \oGr(e,W)$ for the classical incidence variety given on $K$-points by
\[
    \{(p,U) \in W \times \oGr(e,W) \mid p \in U\}.
\]

\begin{dfn}
    \label{dfn:incidence-variety}
    Let $P$ be a polynomial functor and $V \in \bVec$. The $d$-th \emph{incidence variety} on $P$ is defined as 
    \[
        \Gamma_{d,P}(V) \coloneqq \Gamma_e(P(V)) \times_{\oGr(e,P(V))} \oGr(d,V).
    \]
    Its image $\Lambda_{d,P}(V)$ in $P(V)$ is closed and defines the $d$-th \emph{subspace variety}. Similarly, if $X$ is a closed $\bVec$-subvariety of $P$, then we define
    \[
        \Gamma_{d,X}(V) \coloneqq \Gamma_{d,P}(V) \times_{P(V)} X(V), \: \Lambda_{d,X}(V) \coloneqq \Lambda_{d,P}(V) \times_{P(V)} X(V)
    \]
    and call it the $d$-th incidence (resp.\ subspace) variety of $X$.
\end{dfn}

By the above, for every field extension $L \subset K$, we have
\[
    \Lambda_{d,X}(V)(L) = \{p \in X(V)(L) \mid \exists U \in
    \oGr(d,V\otimes_K L): p \in X_{L}(U)\}. 
\]
In particular, the assignment $V \mapsto \Lambda_{d,X}(V)$ defines a $\bVec$-subvariety of $X$, denoted by $\Lambda_{d,X}$. In contrast, the assignment $V \mapsto \Gamma_{d,X}(V)$ is only compatible with injective maps $U \to V$ and does not define a $\bVec$-variety. See also Example~\ref{xmp:resolutionsymmetric} and Example~\ref{xmp:incidence-variety}.

\begin{lm}\label{lm:UpSemicont}
The function $p \mapsto \dim(U_p)$ from $X(V)$ to $\{0,\ldots,\dim(V)\} \subset \bbN$ is lower semicontinuous.
\end{lm}

\begin{proof}
     The preimage of $\{0,\ldots,d\}$ is the (closed) subspace variety $\Lambda_{d,X}(V)$.
\end{proof}

\comment{
\begin{dfn}\label{dfn:minimaldimension}
    We say that a point $p\in X(V)$ is {\em defined in dimension $k$} is the vector space $U$ of Lemma~\ref{lm:minimaldimension} has dimension $k$.
\end{dfn}

\Jcom{Not very appealing terminology. Do we really need this?}
}

\begin{dfn}
Let $\alpha: P \to Q$ be a polynomial transformation.
We define the \textit{image} of $\alpha$ to be the functor
$\oIm(\alpha)$ from $\bVec$ to sets that assigns $\alpha_V(P(V))$ to every $V\in \bVec$ and the maps $Q(\phi)_{|_{\alpha_U(P(U))}}$ to every $\phi \in \oHom(U,V)$.
By taking the closure of $\alpha_V(P(V))$ inside $Q(V)$, we define the \textit{closure of the image} of $\alpha$ and denote it by $\overline{\oIm\alpha}$.
A straightforward verification shows that $\overline{\oIm\alpha}$ is a closed $\bVec$-subvariety of $Q$.
\end{dfn}

\begin{dfn}
    Let $X, Y$ be $\bVec$-varieties.
    A morphism $\alpha: X \to Y$ is {\em dominant} if the morphism $\alpha_V$ is dominant on $Y(V)$ for every $V \in \bVec$.
\end{dfn}

\begin{dfn} \label{dfn:localisation}
Let $X$ be a closed $\bVec$-subvariety in the polynomial functor $P$,
and let $h \in K[P(0)]$. Then $X[1/h]$ is the closed $\bVec$-subvariety in the
polynomial functor $Q:V \mapsto K \oplus P(V)$ defined as
\[ X[1/h](V)=\{(t,p) \mid t \cdot h(p) =1\}, \]
where we regard $h$ as a function on $P(V)$ via pullback along the
linear map $P(0_{V \to 0})$. We regard $X[1/h]$ as an open $\bVec$-subvariety of $X$.
\end{dfn}

\begin{dfn}
Let $X$ be a closed $\bVec$-subvariety in the polynomial functor $P$
and let $U \in \bVec$. Then $\oSh_U X$ is the closed $\bVec$-subvariety
of $\oSh_U P$ defined by $(\oSh_U X)(V)\coloneqq X(U \oplus V) \subset P(U
\oplus V)=(\oSh_U P)(V)$. 
\end{dfn}

\comment{
\subsection{Infinite-dimensional $\oGL$-varieties}\label{ssec:polfuninverselimit}
We refer to \cite{bik-draisma-eggermont-snowden} for the notion of $\oGL$-variety.
We just recall that a $\oGL$-variety is an affine scheme that is the spectrum of a finitely $\oGL$-generated $\oGL$-algebra, namely, an algebra admitting an action of $\oGL$ by algebra automorphisms (turning it into a polynomial $\oGL$-representation), and such that it is generated as a $K$-algebra by the orbits under $\oGL$ of a finite number of elements.

\begin{xmp}
    Consider the space of $\bbN\times \bbN$-matrices whose ring of global section is the polynomial ring $R = K[x_{i,j} \mid i,j \in \bbN]$.
    The usual action of $\oGL$ given by simultaneous row and column operations turns $R$ into a $\oGL$-algebra.
    Note that it is generated by $1$ and $x_{1,2}$.
    Let $I$ be the $\oGL$-stable ideal of $R$ given by the $r \times r$-minors of the matrix $(x_{i,j})_{i,j = 1}^\bbN$.
    Then $\oSpec(R/I)$ is a $\oGL$-variety.
\end{xmp}

Let $X$ be a $\bVec$-variety.
For $n \in \bbN_{\geq 1}$ define the map $\pi_n: K^{n} \to K^{n-1}$ to be the projection onto the first $n-1$ components.
The collection $X(K^n)$ and the morphisms $ X(\pi_{n})$ form an inverse system.
Denote with $X_\infty$ its inverse limit.
Explicitly, up to isomorphism one has:
\begin{displaymath}
 X_\infty = \left \{(\,p_n)_{n \in \bbN} \in \prod_{n \in \bbN} X(K^n) \mid X(\pi_j)(\,p_{j}) = p_{j-1} \text{ for all } j \in \bbN_{\geq 1} \right \},
\end{displaymath}
and the natural projections $\pi_{\infty, n}: X_\infty \to X(K^n)$ are given by mapping $(p_{i})_{i \in \bbN}$ to $p_n$.
The inverse limit $X_\infty$ is a topological space with respect to the inverse limit topology that coincides with the Zariski topology induced by the ring $\varinjlim_n K[X(K^n)]$ (the direct limit of $K[X(K^n)]$ with the dual maps of $X(\pi_n)$).
If $X$ is a closed $\bVec$-variety of $P$ and $I_n$ denotes the defining ideal of $X(K^n)$ inside $K[P(K^n)]$, then $X_\infty$ is the subvariety of $P_\infty$ given by $\cV\left(\bigcup_n I_n\right)$ and it is stable under $\oGL$.
It is in particular a $\oGL$-variety.
Viceversa, given a polynomial functor $P$, and a closed $\oGL$-subvariety $Y$ in $P_\infty$, we can construct a $\bVec$-variety $X$ of $P$ such that its inverse limit $X_\infty$ satisfies $X_\infty = Y$.
}

\subsection{The embedding Theorem and its consequences}\label{sec:embeddingtheorem}
We collect several results about $\bVec$-varieties.  The following theorem
is implicit in \cite{draisma}, and it was made explicit in the infinite-dimensional setting in \cite{bik-draisma-eggermont-snowden}.

\begin{thm}[Embedding Theorem]\label{thm:Embedding}
Let $P$ be a polynomial functor and let $X$ be a proper $\bVec$-variety of $P$.
Let $R$ be an irreducible subfunctor of $P$ of degree $>0$ and let $\pi : P \to P/R$ be the projection transformation.
Let $X'$ be the closure of the projection of $X$ along $\pi$.
Then one of the following holds:
\begin{enumerate}
    \item\label{cilindrical} $X = \pi^{-1}(X')  \cong X' \times R$; or 
    \item\label{closedembedding} there exists a finite-dimensional
    vector space $U$ and a function $h \in K[P(U)]$ that does not
    vanish identically on  $X(U)$ such that the projection $\oSh_U P \to
    (\oSh_U P)/R$ restricts to a closed embedding 
    \[ (\oSh_U X)[1/h] \to ((\oSh_U P)/R)[1/h]. \] 
    Furthermore, $h$ can be chosen as the directional derivative
    $\frac{\partial f}{\partial r}$ for some vector $r \in R(U)$ and
    some function $f \in K[P(U)]$ that vanishes identically on $X(U)$,
    and for any $h$ constructed in this manner the map above is a closed
    embedding.
\end{enumerate}
\end{thm}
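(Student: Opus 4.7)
The strategy is a clean dichotomy: test whether $X$ is invariant under translation along the $R$-direction. If it is, we land in case~(\ref{cilindrical}) via semisimplicity; otherwise, a single directional derivative combined with $\oGL$-equivariance delivers the closed embedding of case~(\ref{closedembedding}).

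\textbf{Case (\ref{cilindrical}).} Suppose $X = \pi^{-1}(X')$. Since $K$ has characteristic zero, $\bPF$ is semisimple: every polynomial functor is a direct sum of Schur functors, which are simple. Hence the short exact sequence $0 \to R \to P \to P/R \to 0$ admits a splitting in $\bPF$, giving a functorial isomorphism $P \cong R \oplus P/R$. Under this identification, $\pi^{-1}(X')(V)$ corresponds to $R(V) \times X'(V)$ for every $V$, so $\pi^{-1}(X') \cong X' \times R$ as $\bVec$-varieties, which is what case~(\ref{cilindrical}) asserts.

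\textbf{Case (\ref{closedembedding}).} Assume $X \subsetneq \pi^{-1}(X')$, so there exist $V_0 \in \bVec$ and a point $p \in \pi^{-1}(X')(V_0) \setminus X(V_0)$; pick $f_0 \in I(X(V_0))$ with $f_0(p) \neq 0$. The first key step is to produce some $U \in \bVec$, some $f \in I(X(U))$, and some $r \in R(U)$ with $h \coloneqq \partial f/\partial r \notin I(X(U))$. I would argue by contradiction: if every $R$-directional derivative of every element of $I(X(U))$ were itself in $I(X(U))$ for every $U$, then by iteration all higher mixed $R$-derivatives would vanish on $X(U)$. Taylor expansion along the line $x + tr$ through any $x \in X(U)$ would then force $f$ to vanish on the entire affine subspace $x + R(U)$, hence (by varying $x$ and taking Zariski closure) on all of $\pi^{-1}(X'(U))$. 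Embedding $V_0$ into a sufficiently large $U$ and applying this to $f_0$ would contradict $f_0(p) \neq 0$.

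The hardest step is to upgrade this single $h$ into the closed embedding $(\oSh_U X)[1/h] \hookrightarrow ((\oSh_U P)/R)[1/h]$. The plan is to exploit the Taylor expansion of $f$ in the $r$-direction: on the locus $\{h \neq 0\}$, the identity $f = 0$ on $\oSh_U X$ yields an explicit regular formula that recovers the component of a point of $\oSh_U X$ along $r$ from its image in $\oSh_U(P/R)$ together with its remaining $R$-coordinates. By the irreducibility of $R$ in $\bPF$ --- so each $R(V)$ is a simple $\oGL(V)$-module --- applying $\oGL$-translates to this single relation generates enough analogous relations to recover \emph{all} $R$-coordinates, not just the one along $r$. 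The main obstacle I anticipate is upgrading this set-theoretic recovery into a scheme-theoretic closed immersion, i.e.\ showing that the pullback on coordinate rings is surjective after inverting $h$; I would handle this by constructing explicit preimages of a generating set of $K[R(U \oplus V)]$ from the $\oGL$-orbit of the Taylor relation, and then invoking that $R$ is generated as a polynomial functor by a single irreducible $\oGL$-orbit.
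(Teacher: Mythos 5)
First, a remark on the comparison itself: the paper does not prove Theorem~\ref{thm:Embedding}; it quotes it from \cite{draisma} (and \cite{bik-draisma-eggermont-snowden}), and the only glimpse of the actual mechanism appears in the proof of Theorem~\ref{thm:LocalEqs}, where it is recalled that the linear span of the pullbacks $f \circ P(\psi)$, with $\psi$ ranging over the maps $U \oplus W \to U$ that are the identity on $U$, contains polynomials of the form $x_j h + r_j$ for a full system of coordinates $x_j$ on $R(W)$. Measured against that proof, your case~(1) is fine, and your first step of case~(2) is essentially correct: if every derivative $\partial f/\partial r$ with $f \in I(X(U))$, $r \in R(U)$ again vanished on $X(U)$, then iterating and using the (finite) Taylor expansion in characteristic zero shows each $f$ vanishes on $X(U)+R(U)$, whose closure is $\pi^{-1}(X')(U)$, forcing $X = \pi^{-1}(X')$.

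The gap is in the main step, and it is twofold. First, you are quotienting by the wrong copy of $R$: the target in case~(2) is $(\oSh_U P)/R$, where $R$ is embedded in $\oSh_U P$ as the \emph{pure $V$-part} $R(V) \subseteq P(V) \subseteq P(U \oplus V)$ (Remark~\ref{rmk:quotientissmaller}); it is neither the single direction $r \in R(U)$ nor the full $R(U\oplus V)$. Your target $\oSh_U(P/R)$ is the latter, and for that the statement is false: take $P = S^1 \oplus S^2$, $R = S^1$, $X(V) = \{(v,v^2)\}$; then $(w,w^2)$ and $(-w,w^2)$ always have the same image in $S^2(U\oplus V)$, and both survive any localisation at an $h \in K[P(U)]$, so no closed embedding exists. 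Second, the mechanism you propose does not produce the needed regular formulas. The function $f$ lives on $P(U)$, so ``$f=0$ on $\oSh_U X$'' (pulled back via the projection $U \oplus V \to U$) involves only the $P(U)$-coordinates and says nothing about the $R(V)$-coordinates that must be recovered; and even within $P(U)$, a Taylor expansion in the $r$-direction on the locus $\partial f/\partial r \neq 0$ gives no \emph{regular} solution for the $r$-coordinate when $f$ has degree $\geq 2$ in it (an implicit-function argument is not surjectivity of the localised coordinate ring map, which is what a closed immersion requires). The actual proof pulls $f$ back along the maps $U \oplus V \to U$, $(u,v) \mapsto u + \chi(v)$ for all $\chi \in \oHom(V,U)$ (equivalently, suitable unipotent $\oGL(U\oplus V)$-translates, so your instinct points in the right direction), and uses characteristic zero to extract the coefficients of the expansion in $\chi$; the crucial outcome is a family of relations that are \emph{linear} in the $R(V)$-coordinates with coefficient $h$, namely $h\,x_j + r_j$ with $r_j$ free of those coordinates, so that $x_j = -r_j/h$ is regular after inverting $h$; irreducibility of $R$ (via Schur-type nondegeneracy of the resulting pairing) is what guarantees that \emph{every} coordinate $x_j$ of $R(V)$ is reached. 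Your sketch invokes $\oGL$-translates and irreducibility but aims them at the wrong coordinates and omits this cross-term/polarisation step, which is the crux; as it stands, the closed-embedding claim is not established.
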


In the last statement, it may be that the partial derivative does vanish
identically on $X$, in which case, of course, the map is trivially a closed
embedding. 

\begin{rmk}\label{rmk:quotientissmaller}
In case (1) the isomorphism comes from the fact that $P \cong (P/R)
\oplus R$ as polynomial functors (since $K$ has characteristic zero). 
In case (2) we identify $R \subset P$ with a subobject of $\oSh_U P$
via Remark~\ref{rmk:shiftissmaller}, and we observe that $h$ is a
function on $(\oSh_U P)(0)$, so that Definition~\ref{dfn:localisation}
applies. 
\end{rmk}

We recap below some of the applications of the Embedding Theorem.

\subsubsection{Topological Noetherianity of polynomial functors.}
\label{sssec:Noetherianity} 

\begin{thm}[{\cite[Theorem 1]{draisma}}]\label{thm:Noetherian}
Let $X$ be a $\bVec$-variety. Then every descending chain of $\bVec$-subvarieties
\[X = X_0 \supset X_1 \supset X_2 \supset \ldots \]
stabilises, that is, there exists $N\geq0$ such that for each $n\geq N$ we have $X_n = X_{n+1}$. Equivalently, for any closed $\bVec$-subvariety $Y$ of $X$ there exist finitely many $U_1,\ldots,U_k \in \bVec$ and $f_i \in K[X(U_i)]$ such that for all $V \in \bVec$ we have 
\[Y(V)=\{p \in X(V) \mid \forall i \; \forall \phi \in \oHom(V,U_i): f_i(X(\phi)p)=0\}.\]
\end{thm}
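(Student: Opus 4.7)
The plan is to proceed by well-founded induction on the polynomial functor $P$ in which $X$ embeds, with respect to the partial order $\prec$ from Section~\ref{sssec:orderpolyfun}, and with the Embedding Theorem (Theorem~\ref{thm:Embedding}) as the central reduction tool. Since every $\bVec$-variety is a closed $\bVec$-subvariety of some $P$, it suffices to prove the descending chain condition (DCC) for closed $\bVec$-subvarieties of every polynomial functor $P$. The base case, $P$ constant of degree zero, reduces to classical Noetherianity of affine $K$-schemes of finite type, since a $\bVec$-subvariety of a constant functor is determined by its value on $\{0\}$.

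For the inductive step, I assume the result for all $Q \prec P$ and let $X_0 \supset X_1 \supset \cdots$ be a descending chain of closed $\bVec$-subvarieties of $P$. Passing to a tail I may assume $X_0 \subsetneq P$. I choose an irreducible subfunctor $R \subset P$ in the top homogeneous component of $P$, so that both $P/R$ and $(\oSh_U P)/R$ are strictly smaller than $P$ in $\prec$ for every $U \in \bVec$, and apply the Embedding Theorem to $X_0$ with respect to this $R$. In case (1), $X_0 \cong X_0' \times R$ where $X_0' = \overline{\pi(X_0)} \subset P/R$; the projected chain $X_n' := \overline{\pi(X_n)}$ stabilises by the inductive hypothesis applied to $P/R$, and the remaining variation in the $R$-fibre is controlled by a secondary application of the inductive hypothesis on the product structure. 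In case (2), after shifting by a suitable $U$ and localising at a suitable $h \in K[P(U)]$ nonvanishing on $X_0(U)$, one obtains a closed embedding $(\oSh_U X_0)[1/h] \hookrightarrow ((\oSh_U P)/R)[1/h]$; the inductive hypothesis applied to $(\oSh_U P)/R \prec P$ yields stabilisation of the chain on the distinguished open subset $D(h) \subset \oSh_U P$, while the complementary chain inside $V(h) \cap \oSh_U X_0$ is handled by an inner induction. Stabilisation of the shifted chain implies stabilisation of the original chain because the natural transformation $X \to \oSh_U X$ admits a left inverse, by Remark~\ref{rmk:shiftissmaller}.

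The main obstacle is precisely the management of this nested induction: while the well-founded order $\prec$ cleanly handles the reduction from $P$ to $P/R$ or to $(\oSh_U P)/R$, the passage from stabilisation on the open $D(h)$ to stabilisation on all of $\oSh_U X_0$ requires an additional layer of induction on the complement $V(h)$, which is not in general smaller than $P$ in $\prec$. One resolves this by introducing a secondary well-founded invariant (such as the Krull dimension of $X_0(U)$ at the chosen $U$) and running a lexicographic induction on the pair consisting of this invariant and the polynomial functor. Finally, the equivalence of the DCC with the existence of finitely many defining tests $(U_i, f_i)$ follows by a standard argument: starting from $Y_0 = X$, iteratively intersect with the vanishing locus of one new test that cuts out strictly more than $Y$; this produces a strictly descending chain that, by the DCC, must terminate after finitely many steps, yielding the desired finite description.
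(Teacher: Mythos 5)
First, a remark on the comparison itself: the paper does not prove this statement — it is quoted verbatim from \cite[Theorem~1]{draisma} — so your sketch can only be measured against the strategy of that cited proof, whose top-level shape (well-founded induction on $P$ with respect to $\prec$, the Embedding Theorem, shifting and localising) you do reproduce. However, your sketch has genuine gaps at exactly the two points where the real work lies. In case (1), the phrase ``a secondary application of the inductive hypothesis on the product structure'' is not a valid step: closed $\bVec$-subvarieties of $X_0' \times R$ are not products, and $X_0' \times R$ sits inside $(P/R) \oplus R = P$ itself, not inside any functor $\prec P$, so the inductive hypothesis says nothing about chains in it. Stabilisation of the projected chain in $X_0' \subseteq P/R$ does not control the fibre direction (take, say, $P_d$ containing two irreducible summands and a chain that only shrinks in the $R$-coordinate while projecting onto all of $X_0'$). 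Note also that you cannot dodge this by choosing $R$ better: if case (1) holds for every irreducible $R \subseteq P_d$, then $X_0 = X_0'' \times P_d$ with $X_0'' \subsetneq P_{<d}$, and you face the same product problem in its purest form. Proving DCC for such a product over an infinite-dimensional base is essentially the theorem itself; in the cited proof this is precisely where one needs the statement in a relative form (over arbitrary Noetherian base rings, so that base directions can be absorbed), not a formal appeal to induction.

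In case (2), your treatment of the open locus is fine: $(\oSh_U P)/R \prec P$ is correct, and the original chain is recovered from the shifted one as you say. But the complement is not handled. The locus where all pullbacks of $h$ vanish is a proper closed $\bVec$-subvariety $Y \subsetneq X_0$, and your proposed lexicographic induction on the pair (functor, Krull dimension of $X_0(U)$) is not well-founded: when you recurse on $Y$, a nonvanishing directional derivative $h'$ is only guaranteed to exist on some new, typically much larger, space $U'$, the ambient functor is still $P$, and there is no reason for $\dim Y(U')$, or for $\dim$ of the next complement at any fixed space, to drop below $\dim X_0(U)$; since the auxiliary spaces $U$ are unbounded, no fixed finite-dimensional invariant decreases. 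What the recursion actually produces is a strictly descending chain $X_0 \supsetneq Y_1 \supsetneq Y_2 \supsetneq \cdots$ of complements whose termination is exactly the DCC you are trying to prove, so the argument is circular. Closing this loop, together with the cylinder case, is the actual content of Draisma's proof. The remaining ingredients of your sketch (the degree-$0$ base case, the use of $\prec$, and the standard derivation of the finite family $(U_i,f_i)$ from the DCC) are fine but are the easy parts.
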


In the latter setting, we say that the $f_i$ define the $\bVec$-variety $Y$ in $X$.

\begin{dfn}
A $\bVec$-variety $X$ is called {\em irreducible} if it is non-empty and
if $X=Y \cup Z$ for closed $\bVec$-subvarieties $Y,Z$ of $X$ implies
that $X=Y$ or $X=Z$. 
\end{dfn}

Irreducibility of $X$ is equivalent to the condition that $X(V)$ is
irreducible for all $V \in \bVec$. By Noetherianity, every $\bVec$-variety
can be written in a unique manner as $X_1 \cup \cdots \cup X_k$ where
the $X_i$ are irreducible and none is contained in any other; the $X_i$
are the {\em irreducible components} of $X$.

\subsubsection{The shift theorem}

Next we recall the shift theorem in \cite{bik-draisma-eggermont-snowden}.
We rephrase it here in the language of polynomial functors and $\bVec$-varieties.
\begin{prp}[{\cite[Theorem~5.1]{bik-draisma-eggermont-snowden}}]\label{prp:openisom}
Let $X$ be a $\bVec$-variety. Then there exist a finite-dimensional
vector space $U$, a nonzero function $h \in K[X(U)]$, a polynomial
functor $Q$, and a finite-dimensional affine variety $B$ over $K$ 
such that \[(\oSh_U X) [1/h] \cong B \times Q\;.\]
\end{prp}

For a proof in the functorial setting, see \cite[Proposition~2.4.4]{danelon:thesis}.
\begin{rmk}\label{rmk:smaller}
If $X \subset P$ and there is an $R$ in the top-degree part of
$P$ for which case (2) of Theorem~\ref{thm:Embedding} applies, then the
polynomial functor $Q$ of Theorem~\ref{prp:openisom} can be chosen so
that $Q \prec P$ in the order of polynomial functors given in Section~\ref{sssec:orderpolyfun}.
Indeed, $Q$ can be chosen isomorphic to a subfunctor of 
$\oSh_{U \oplus U'} P/ \oSh_{U'}R$ with $U$ as in case (2) and $U'$ a
second vector space. This functor is smaller than $P$.
\end{rmk}

\comment{
\subsubsection{Morphisms and $\bVec$-varieties} \label{ssec:Morphisms}

Let $X$ be an irreducible $\bVec$-variety. By
Proposition~\ref{prp:openisom} there exists a dominant morphism $B \times
Q \to \oSh_U X$ for some finite-dimensional variety $B$ and some pure
polynomial functor $Q$. By irreducibility of $X$, $B$ can be chosen
irreducible. Furthermore, the maps $X(0_{U \to V} \oplus \id_V):
X(U \oplus V) \to X(V)$ form a surjective morphism $\oSh_U X
\to X$. Hence every irreducible $\bVec$-variety $X$ admits a dominant
morphism $B \times Q \to X$ for some irreducible finite-dimensional
variety $B$ and some pure polynomial functor $Q$---$X$ is ``unirational
in the $\bVec$-direction''. See the unirationality theorem in
\cite{bik-draisma-eggermont-snowden}. 

\begin{dfn}[The $\bVec$-version of {\cite[Definition~8.1]{bik-draisma-eggermont-snowden}}]\label{dfn:typicalmorphism2}
   Let $B$ be a finite-dimensional variety, $Q$ a pure polynomial
   functor, and $X$ be an irreducible $\bVec$-variety.
    We say that a dominant morphism $\alpha: B\times Q \to X$ is {\em typical (for $X$)} if there does not exist a proper $\bVec$-subvariety $Z$ of $B \times Q$ such that $\alpha_{|_Z}$ is dominant on $X$.
\end{dfn}

\begin{prp}\label{prp:typicalisminimal}
    Let $X$ be an irreducible $\bVec$-variety and let $\alpha: B \times Q \to X$, and $\beta : B' \times Q' \to X$ be typical morphisms.
    Then $Q' \cong Q$ and $\dim(B') = \dim(B)$.
\end{prp}

\begin{proof}
After passing to the inverse limit, use \cite[Corollary~8.5]{bik-draisma-eggermont-snowden}.
\end{proof}

}

\subsubsection{Dimension}
\label{sssec:dimension}

The following is well-known.

\begin{prp}\label{prp:dimensionpolyfun}
Let $P$ be a polynomial functor. Then there exists a univariate polynomial $f_P$ in $\dim(V)$ of degree $\deg P$ with rational coefficients such that for every $V \in \bVec$:
\begin{displaymath}
\dim(P(V)) = f_P(\dim(V)).
\end{displaymath}
We call the polynomial $f_P$ the {\em dimension function} of $P$.
\end{prp}

\begin{dfn}
Let $X$ be a $\bVec$-variety. We define the {\em dimension function}
$f_X$ of $X$ to be:
\begin{displaymath}
f_X(n) \coloneqq \dim(X(K^n)). \qedhere
\end{displaymath}
\end{dfn}

\begin{prp}\label{prp:dimensionfunctionVecvars}
If $X$ is a $\bVec$-variety, then there exists a polynomial $p_X$ with
rational coefficients such that for all $n \gg 0$ we have $f_X(n)=p_X(n)$.
\end{prp}

\begin{proof}
By Noetherianity, we may assume that $X$ is irreducible. By
Proposition~\ref{prp:openisom}, for some nonnegative integer $m$ we have,
for all $n$,
\[ \dim(X(K^m \oplus K^n))=\dim((\oSh_{K^m} X)(K^n)[1/h])=\dim(B) +
\dim(Q(K^n)), \]
so the result follows from Proposition~\ref{prp:dimensionpolyfun}
applied to $Q$.
\end{proof}

There are alternative proofs of this statement, see for example
\cite[Proposition~2.4.7]{danelon:thesis}---indeed, even a more general
version over arbitrary rings with Noetherian spectrum holds, as sketched
in \cite[Proposition~86]{bik-danelon-draisma:topologicalN}.

\comment{
We can refine the above proposition a little, as follows. 

\begin{prp}\label{prp:smallerdegree}
Let $X$ be an closed irreducible $\bVec$-subvariety in a
polynomial functor $P$ of degree $d>0$. Suppose that case (1) of
Theorem~\ref{thm:Embedding} does not apply for any irreducible subfunctor
$R$ of $P$ of degree $d$.  Then the dimension function $f_X$ of $X$
is eventually a polynomial of degree strictly less than $d$.
\end{prp}

\begin{proof}
Since $X \subset P$, we have $f_X(n) \leq p_X(n)$ for all $n$. Hence
$f_X$ is eventually a polynomial of degree $\leq d$. Suppose that it
of degree $d$. Let $U,h,B,Q$ be as in Proposition~\ref{prop:openisom},
and let $\alpha$ be the isomorphism $B \times Q \to \oSh_U(X)[1/h]$.

Then it follows that $\dim(Q(K^n))=\dim(X(U \oplus K^n) - \dim(B)$, so
that $Q$ has degree $d$. Write $Q=Q_{<d} \oplus Q_d$ where $Q_{<d}$
is the sum of the homogeneous components of $Q$ of degree $<d$. By looking at
degrees, we see that the map
\[
B \times Q_{< d} \times Q_d \xrightarrow{\alpha} \oSh_U P = (\oSh_U P)_{<d}
\times P_d
\]
can be decompsed as 
\[ (b,q_{<d},q_d) \mapsto (\alpha_{<d}(b,q_{<d}),\beta_d(b,q_{<d}) +
\rho(b)q_d) \]
where $\rho$ is a morphism from $B$ to the open subscheme in the space
of $\bPF$-homomorphisms $Q_d \to P_d$ consisting of injective maps.

where the latter map is the natural transformation $\oSh_U P \to P$
given by the maps $P(0_{U \to V} \oplus \id_V):P(U \oplus V) \to P(V)$.

By the $\oGL$-equivariance, the transformation $\alpha$ maps $Q_d$ linearly into $P_d$.
Therefore, $X$ contains an open dense subset of the form $Y \times
Q_d$, and with respect to any of the irreducible subobjects of $Q_d$, $X$ satisfies case~\ref{cilindrical} of Theorem~\ref{thm:Embedding}, against our assumption.
\end{proof}
}

\section{Proof of the main theorem}

In this section we prove Theorem~\ref{thm:mainintro}. That is, given a
$\bVec$-variety $X$, we have to show that there exists a unique closed
$\bVec$-subvariety $Y$ of $X$ such that $Y(V)=\oSing(X(V))$ for all $V
\in \bVec$ of sufficiently large dimension; and that this $Y$ satisfies 
$Y(V) \supset \oSing(X(V))$ for all $V \in \bVec$. 

Along the way, following the same inductive approach as
for Theorem~\ref{thm:mainintro}, we will also establish
Theorem~\ref{thm:LocalEqs}.

\subsection{Uniqueness of $Y$}

The uniqueness of the $\bVec$-variety $Y \subset X$ such that $Y(V)
= \oSing(X(V))$ for all $V$ of sufficiently high dimension follows
immediately from the fact that $Y(V)$ for $\dim(V) \gg 0$ determines
$Y(V)$ for small $V$ by Proposition~\ref{prp:Easy}. 

\subsection{Singular points remain singular}

\begin{lm} \label{lm:SingRemainsSing}
Let $X$ be a $\bVec$-variety, $U \in \bVec$, and $p$ a singular point
in $X(U)$. Then for any $V \in \bVec$ and any injective linear map
$\iota:U \to V$, the point $X(\iota)(p)$ is singular in $X(V)$. 
\end{lm}

\begin{proof}
Let $\pi:V \to U$ be a linear map such that $\pi \circ \iota=\id_U$.
Then $X(\pi) \circ X(\iota)=\id_{X(U)}$, and it follows that for any $p
\in X(U)$, the map $T_{X(\iota)(p)} X(\pi):T_{X(\iota)(p)} X(V) \to T_{p}X(U)$ is surjective. Then the claim follows by e.g.\ \cite[Remark 6.8]{chiu-defernex-docampo:embcodimarcs}.
\end{proof}

\begin{lm}
Let $Y$ be a closed $\bVec$-subvariety of $X$ with $Y(V)=\oSing(X(V))$
for all $V \in \bVec$ with $\dim(V) \gg 0$. Then $Y(V)\supset
\oSing(X(V))$
for all $V$.
\end{lm}

\begin{proof}
Let $p \in \oSing(X(V))$ and let $W$ with $\dim(W) \gg 0$, so that
there exists an injective linear map $\iota:V \to W$ and moreover
$\oSing(X(W))=Y(W)$. Then $X(\iota)$ maps $p$ into $\oSing(X(W))=Y(W)$
by Lemma~\ref{lm:SingRemainsSing}. Hence, with $\pi$ as in the proof of
Lemma~\ref{lm:SingRemainsSing}, we have $p=X(\pi)X(\iota) \in Y(V)$.
\end{proof}

In what follows, we will therefore restrict ourselves to the existence
of $Y$ with the property that $Y(V) = \oSing(X(V))$ for all $V$ with
$\dim(V) \gg 0$.

\comment{

Our main result is a far-reaching generalisation of the phenomenon in Example~\ref{xmp:singularlocusboundedmatrices}: we will show that the
singular locus of a $\bVec$-variety is itself a $\bVec$-variety.

\begin{thm}\label{thm:main}
For any polynomial functor $P$ over a field $K$ of
characteristic zero, and for any $\bVec$-variety $X$ of $P$,
there exists a unique $\bVec$-subvariety $Y$ of $X$ such that
for all $V \in \bVec$ of sufficiently large dimension we have
$Y(V)=\oSing(X(V))$.
Furthermore, for {\em all} $V$ we then have
$\oSing(X(V)) \subset Y(V)$.
\end{thm}

We denote the $\bVec$-variety $Y$ from the above theorem by $X^\tsing$, but warn
that, for low-dimensional $V$, it will in general not be true that
$X^\tsing(V)=\oSing(X(V))$; see the following example, which formalises
Example~\ref{xmp:singularlocusboundedmatrices}.

\begin{xmp}\label{xmp:singularboundedrank}
Consider the second tensor power $T^2$ and the $\bVec$-variety $\cM_{\leq r}$ defined by the vanishing of all $(r+1) \times
(r+1)$-subdeterminants.
For $\dim(V) \leq r$, the variety $\cM_{\leq r}(V)$ coincides with the ambient space, with empty singular locus.
On the other hand, for $\dim(V) > r$,
the singular locus of $\cM_{\leq r}(V)$ is precisely $\cM_{\leq r-1}(V)$.
Hence we have $\cM_{\leq r}^\tsing = \cM_{\leq r-1}$.
Note that $\cM_{\leq r}^\tsing(V)=\oSing(\cM_{\leq r}(V))$ for $\dim(V)>r$, while $\oSing(\cM_{\leq r}(V))\subsetneq \cM_{\leq r-1}(V)$ for $\dim(V)\leq r$.
\end{xmp}

}

\subsection{Reduction to $K$ algebraically closed}
\label{ssec:reduction-algebraically-closed}

\begin{lm}
    Let $X$ be a $\bVec$-variety and let $\ol{K}$ denote an algebraic closure of $K$. Write $X_{\ol{K}}$ for the base change of $X$, see Remark~\ref{rmk:extendingbasefield}. If Theorem~\ref{thm:mainintro} holds for the $\bVec_{\ol{K}}$-variety $X_{\ol{K}}$, then it holds for $X$.
\end{lm}

\begin{proof}
    For $V \in \bVec$ write $V_{\ol{K}} \coloneqq V\otimes_K \ol{K}$. Then the morphism
    \[
        \pi_V \colon X_{\ol{K}}(V_{\ol{K}}) = X(V) \times_K \oSpec \ol{K} \to X(V)
    \]
    is closed. Hence, given any closed $\bVec_{\ol{K}}$-subvariety $\widetilde{Y}$ of $X_{\ol{K}}$, setting $Y(V) \coloneqq \pi_V(\widetilde{Y}(V_{\ol{K}}))$ defines a closed $\bVec$-subvariety of $X$.

    Conversely, we have that the image of $\oSing(X_{\ol{K}}(V_{\ol{K}}))$ under $\pi_V$ equals $\oSing(X(V))$. For example, if $X(V)$ is integral, then its singular locus is given by the Fitting ideal $F^d_V \coloneqq\oFitt^d(\Omega_{X(V)/K})$, where $d = \dim X(V)$. But then $X(V) \times_K \oSpec \ol{K}$ is equidimensional of dimension $d$, so its singular locus is given by the pullback of $F^d_V$. Since $\pi_V$ is flat, the image of the closed subset defined by the pullback of $F^d_V$ equals that of $F^d_V$ itself.
\end{proof}

\begin{lm}
Let $X$ be a closed $\bVec$-subvariety of a polynomial functor $P$.
If Theorem~\ref{thm:LocalEqs} holds for $X_{\ol{K}} \subseteq P_{\ol{K}}$,
then it also holds for $X \subseteq P$.
\end{lm}

\begin{proof}
Let $U' \in \bVec_{\ol{K}}$ have the property of $U$ in
Theorem~\ref{thm:LocalEqs}, but for $X_{\ol{K}}$. Without loss of
generality, $U'=U \otimes_K \ol{K}$, where $U \in \bVec_K$. Then the ideal
of $X_{\ol{K}}(U')$ in $\ol{K}[P_{\ol{K}}(U')]$ is obtained from that of
$X(U)$ in $K[P(U)]$ by taking the tensor product with $\ol{K}$. Hence if
$I_V \subseteq K[P(V)]$ for $V \in \bVec_K$ is the ideal of pullbacks as in the theorem
for $X$, and $I_{V \otimes_K \ol{K}}$ is the ideal of pullbacks for
$X_{\ol{K}}$, then $I_{V \otimes_K \ol{K}}=I_V \otimes_K \ol{K}$. By
assumption, the latter defines $X_{\ol{K}}(V \otimes_K \ol{K})$ as a set,
and hence $I_V$ defines $X_V$ as a set. Also by assumption, the reduced
locus of $\oSpec(K[P(V)] \otimes_K \ol{K} / I_V \otimes_K \ol{K})$ contains
the nonsingular locus of $X_{\ol{K}}(V \otimes_K \ol{K})$. Since $K$
has characteristic $0$, the former maps onto the reduced locus of
$\oSpec(K[P(V)]/I_V)$, and the latter maps onto the nonsingular locus of
$X(V)$. This proves Theorem \ref{thm:LocalEqs} for $X \subseteq P$.
\end{proof}

Until the end of this section, we assume that $K = \ol{K}$. By point we mean a closed ($K$-rational) one.

\subsection{Reduction to the irreducible case}

\begin{lm} \label{lm:Irred}
If Theorem~\ref{thm:mainintro} holds for irreducible
$\bVec$-varieties, then it holds for all $\bVec$-varieties.
\end{lm}

\begin{proof}
By Noetherianity (Theorem~\ref{thm:Noetherian}), the $\bVec$-variety $X$ admits
a unique decomposition $X_1 \cup \cdots \cup X_s$ with $s$ a nonnegative integer and each $X_i$ an irreducible $\bVec$-variety that is not
contained in the union $\bigcup_{j \neq i} X_j$.
This means that $X_i(V)
\not\subset \bigcup_{j \in [s]\setminus \{i\}} X_j(V)$ for all vector spaces $V$ of sufficiently large
dimension. Then for each $V\in\bVec$ with $\dim(V) \gg 0$, we have
\[ \oSing(X(V)) = \left(\bigcup_{i = 1}^s \oSing(X_i(V))\right) \cup
\left(\bigcup_{\substack{i,j \in [s];\\i \neq j}} (X_i(V) \cap X_j(V)) \right). \]
By assumption, $X_i$ has a closed $\bVec$-subvariety $Y_i$ such that 
$\oSing(X_i(V)) = Y_i(V)$ for all $V$ with 
$\dim(V) \gg 0$. Now the closed
$\bVec$-subvariety 
\[ Y\coloneqq\left(\bigcup_{i=1}^s Y_i\right) \cup \left(\bigcup_{i \neq j} X_i \cap
X_j\right) \]
has the property that $\oSing(X(V)) = Y(V)$ for all $V$ with
$\dim(V) \gg 0$. 
\end{proof}

\begin{lm}
If Theorem~\ref{thm:LocalEqs} holds for irreducible $\bVec$-varieties,
then it holds for all $\bVec$-varieties.
\end{lm}

\begin{proof}
Let $X=X_1 \cup \cdots \cup X_s$ denote the (non-redundant) decomposition
of $X$ into irreducible $\bVec$-varieties, and let $U \in \bVec$ be
large enough so that it has the property of Theorem~\ref{thm:LocalEqs}
for all $X_i$, as well as the property that no $X_i(U)$ is contained
in any $X_j(U)$ with $i \neq j$. We claim that $U$ then also has the
property of Theorem~\ref{thm:LocalEqs} for $X$.

First of all, if a point $p \in P(V)$ has the property that $P(\phi)(p)
\in X(U)$ for all $\phi \in \oHom(V,U)$, then since $\oHom(V,U)$ is
irreducible, there exists an $i$ such that $P(\phi)(p) \in X_i(U)$ for
all $\phi$, so that $p \in X_i(V)$. Hence the ideal $I_V$ of pullbacks
defines $X(V)$ as a set. 

Next let $V \in \bVec$ and let $p \in X(V)$ be a nonsingular point. We may
assume that $\dim(V)>\dim(U)$, since otherwise the pullbacks of the
equations for $X(U)$ generate a radical ideal in $K[P(V)]$. It then
follows that no $X_i(V)$ is contained in any $X_j(V)$ with $i \neq j$, so
that $X(V)=X_1(V) \cup \cdots \cup X_s(V)$ is the decomposition of $X(V)$
into irreducible components. As $p$ is a nonsingular point of $X(V)$,
it lies in a unique $X_i(V)$, say in $X_1(V)$. 
Let $c = \dim P(V) - \dim_p X_1(V) = \dim P(V) - \dim_p X(V)$. By the irreducible case of Theorem~\ref{thm:LocalEqs}, there exist $f_1, \ldots,f_c \in I(X_1(U))$, $\phi_1,\ldots,\phi_c \in \oHom(V,U)$ and $x_1,\ldots,x_c \in P(V)^*$ such that
\[
    \det (\partial (f_i \circ P(\phi_i))/ \partial x_j)(p) \neq 0.
\]
Note that for fixed $f_1,\ldots,f_c$ this is an open condition for $\phi_1,\ldots,\phi_c$. Therefore we may assume that the latter satisfy in addition the open condition $\phi_i(p) \notin X_j(U)$ for all $j > 1$ and $i = 1, \ldots,c$. Thus there exists $g \in (I(X(U)) : I(X_1(U)))$ such that $(g \circ P(\phi_i))(p) \neq 0$ for all $i = 1,\ldots,c$. Then $gf_1,\ldots,gf_c \in I(X(U))$ satisfy
\[
    \det (\partial (g f_i\circ P(\phi_i))/\partial x_j)(p) \neq 0
\]
and by the Jacobian criterion we have that $I_V = I(X(V))$ locally at $p$.

\end{proof}

\subsection{The irreducible case}
\begin{proof}[Proof of Theorems~\ref{thm:mainintro} for irreducible $X$.]

By definition, the irreducible $\bVec$-variety $X$ 
is a closed $\bVec$-subvariety of a polynomial functor
$P$. We proceed by induction on $P$,
using the well-founded order from Section~\ref{sssec:orderpolyfun}. For $\deg(P)=0$,
$X(V)$ is just a fixed closed subvariety $X(V)=X(0)$, independent of $V$, of the fixed
finite-dimensional affine space $P(V)=P(0)$, and we may set $Y(V)\coloneqq \oSing( X(0))$
for all $V$.

Suppose next that $d\coloneqq \deg(P)>0$, suppose that the theorem holds
for all polynomial functors $Q \prec P$, and let $R$ be an irreducible
subfunctor of the degree-$d$ part $P_d$ of $P$.  We write $P=P' \oplus R$
where $P' \coloneqq P/R \prec P$. Let $\pi:P \to P'$ be the projection
along $R$ and define $X'(V)\coloneqq \overline{\pi_V(X(V))}$ for every
$V \in \bVec$.  Then $X'$ is a closed $\bVec$-variety in $P'$.

By Theorem~\ref{thm:Embedding}, there are two possibilities.  In case (1), 
we have $X(V)=X'(V) \times R(V)$ for all $V$, and then
$\oSing(X(V))=\oSing(X'(V)) \times R(V)$ for all $V$.  By the induction
assumption, there exists a closed subvariey $Y'$ of $X'$ such that
$\oSing(X'(V)) = Y'(V)$ for all $V \in \bVec$
of sufficiently high dimension. Then $Y\coloneqq  Y' \times R$ has the
corresponding property for $X$. 

The previous paragraph applies as long as there exists an irreducible
subfunctor of $P_d$ to which case (1) of Theorem~\ref{thm:Embedding}
applies. So we may assume that such a subfunctor does not exist. In case (2) of Theorem~\ref{thm:Embedding}, we embed a certain open subset of a shift of $X$ into a polynomial
functor smaller than $P$. In what follows, we will have to distinguish
between points where at least one such embedding is defined, and points
where none of them is. We will first argue that the latter points are
automatically contained in the singular locus. 

More precisely, we define a closed $\bVec$-subvariety $Z \subset X$
of $X$ as follows. For $V \in \bVec$, let $I_X(V) \subset K[P(V)]$
be the vanishing ideal of $X$. Then $Z$ is the largest closed
$\bVec$-subvariety of $X$ such that for each $V$ all polynomials in
the set 
\[ \left\{\frac{\partial f}{\partial r} \mid f \in I_X(V),\
R
\text{ an irreducible subfunctor of } P_d, \text{ and } r \in R(V) \right\} \]
vanish identically on $Z(V)$.

\begin{lm}\label{lm:Z}
For all vector spaces $V$ of sufficiently large dimension, $Z(V) \subset
\oSing(X(V))$. 
\end{lm}

\begin{proof}
Since polynomial functors in characteristic zero form a semisimple
category, $P_d$ is the sum of its irreducible subfunctors.
Therefore, the directional derivative $\frac{\partial
f}{\partial r}$ vanishes on $Z(V)$ for all $V \in \bVec$, $f \in
I_X(V)$, and $r \in P_d(V)$. 

We construct the Jacobi-matrix for $X(V)$ as follows: the rows
correspond to a generating set of $I_X(V)$, the first
$n_{<d}=n_{<d}(V)$ columns correspond to coordinates on $P_{<d}(V)$, and
the last $n_d=n_d(V)$ columns correspond to coordinates on $P_d(V)$. By
Proposition~\ref{prp:dimensionpolyfun}, $n_{<d}$ grows as a polynomial
of degree $<d$ in $\dim(V)$ and $n_d$ grows as a polynomial of degree $d$
in $\dim(V)$.

Next, we claim that the codimension $c(V)$ of $X(V)$ in $P(V)$ is a
polynomial of degree $d$ in $V$ for $\dim(V) \gg 0$. Indeed, after
choosing any $R,U,h$ as in case (2) of Theorem~\ref{thm:Embedding}, we
find that $\dim(P(U \oplus V)) - \dim(X(U \oplus V)) \geq \dim(R(V))$,
and the latter is a polynomial of degree $d$ in $V$.

But then, for $\dim(V) \gg 0$, any $c(V) \times c(V)$-submatrix of the Jacobi
matrix intersects the last $n_d$ columns. On $Z(V)$, the last $n_d$
columns are identically zero. Hence $Z(V) \subset \oSing(X(V))$,
as desired.
\end{proof}

By Noetherianity, the closed $\bVec$-subvariety $Z$ of $X$ is
defined by finitely many $h_i\coloneqq  \partial f_i/\partial
r_i$ for $i = 1, \ldots, k$ with $f_i \in I_X(U_i)$ and $r_i \in
R_i(U_i)$, where the $R_i$ are irreducible subfunctors of $P_d$. Now by
Theorem~\ref{thm:Embedding} case (2), $\oSh_{U_i}(X)[1/h_i]$ is isomorphic
to a closed $\bVec$-subvariety $Z_i \subset P''_i$ with $P''_i\coloneqq
K^1 \oplus (\oSh_{U_i}P)/R_i$. Now $P''_i \prec P$, so by the induction
assumption there exists a closed $\bVec$-variety $Y'_i$ of $Z_i $ such
that $Y'_i(V) = \oSing(Z_i(V))$ for all $V$ with 
$\dim(V) \geq m_i$, for some $m_i \in \bbN$.

We use the isomorphism $\oSh_{U_i}(X)[1/h_i](V) \cong Z_i(V)$ for $V
\in \bVec$ to identify $Y'_i(V)$ with a locally closed subset of $X(U_i
\oplus V)$.  By Proposition~\ref{prp:Easy}, $\oGL(U_i \oplus V)$ acts on
the latter variety by automorphisms, hence preserving the singular locus.
Take $\dim(V)  \geq m_i$, so that $Y'_i(V)$ coincides with the locus of
singular points in $X(U_i \oplus V)$ where $h_i$ is nonzero. Then for any
$p \in Y_i'(V)$ the map $\oGL(U_i \oplus V) \to X(U_i \oplus V), g \mapsto g \cdot
p$ maps $\oGL(U_i \oplus V)$ into $\oSing(X(U_i \oplus V))$, and in fact
an open dense subset of $\oGL(U_i \oplus V)$ into $Y'_i(V)$. This implies
that the closure $\overline{Y'_i(V)}$ of $Y'_i(V)$ in $X(U_i \oplus V)$ is
a $\oGL(U_i \oplus V)$-stable closed subset of $\oSing(X(U_i \oplus V))$.

For $W \in \bVec$, we define $Y_i(W) \subset X(W)$ as follows. Choose
any $V \in \bVec$ with $\dim(V) \geq \max\{m_i, \dim(W)-\dim(U_i)\}$
and any surjective linear map $\phi:U_i \oplus V \to W$ and set
\[ Y_i(W)\coloneqq  \overline{P(\phi)Y'_i(V)}. \]

\begin{lm}\label{lm:Yi}
The rule $Y_i$ defines a closed $\bVec$-subvariety of $X$, which moreover
satisfies $Y_i(W) \subset \oSing(X(W))$ for all $W$ with $\dim(W)
\geq m_i+\dim(U_i)$.  More precisely, for such large $W$, $Y_i(W)$
is the closure in $X(W)$ of the set
\[ \{p \in \oSing(X(W)) \mid \exists \psi \in \oHom(W,U_i):
h_i(P(\psi)p) \neq 0\}. \]
\end{lm}

\begin{proof}
To simplify notation, we set $U \coloneqq U_{i}, h \coloneqq h_{i}, 
Y' \coloneqq Y'_{i}$, and $ Y \coloneqq Y_{i}$.
We first show that $Y(W)$ does not depend on the choice of
(sufficiently large) $V$ and of $\phi$. Let $\phi':U \oplus V' \to W$
be another surjective linear map. Without loss of generality, assume
that $\dim(V') \geq \dim(V)$. Write $\ker \phi'=A_1 \oplus A_2$ where
$\dim(A_2)=\dim(V')-\dim(V)$. Then $\phi'$ factors as 
\[ \phi'=\phi \circ g_1 \circ (\oid_U \times \psi) \circ g_2\] 
where $g_2 \in \oGL(U \oplus
V')$ maps $A_2$ into a subspace $B_1$ of $V'$, $\psi:V' \to V$ is a
linear map with kernel $B_1$, and $g_1 \in \oGL(U \oplus V)$ maps the
image of $A_1$ under $(\oid_U \times \psi) \circ g_2$, which has dimension
$\dim(U \oplus V)-\dim(W)$, into $\ker \phi$.  Now, by the discussion above, $g_2$
preserves $\overline{Y'(V')}$, $g_1$ preserves $\overline{Y'(V)}$,
and by definition $P(\oid_U \times \psi)$ maps the dense subset
$Y'(V')$ of the former onto the dense subset $Y'(V)$ of the latter. We
conclude that $\overline{P(\phi')(Y'(V'))} = \overline{P(\phi)(Y'(V))}$, 
as desired. 

By construction, each $Y(W)$ is closed in $X(W)$. 
We next argue that $Y$ is (covariantly) functorial in $W$. Let $\sigma:
W \to W'$ be a linear map. We need to verify that $P(\sigma)$ maps
$Y(W)$ into $Y(W')$. To this end, we observe that we can find a
sufficiently large space $V'$ and linear maps $\sigma':V  \to V'$
and $\phi':U \oplus V' \to W'$ such that the following diagram commutes
and the vertical maps are surjective:
\[ 
\xymatrix{
U \oplus V \ar[r]^{\id_U \oplus \sigma'} \ar[d]_{\phi} & U \oplus V' \ar[d]^{\phi'}\\
W \ar[r]_{\sigma} & W'.
}
\]
Indeed, then we have 
\begin{align*} 
P(\sigma)(Y(W))&=P(\sigma)(\overline{P(\phi)(Y'(V))})
\subset \overline{P(\sigma \circ \phi)(Y'(V))}\\
&=\overline{P(\phi')(P(\id_U \oplus \sigma')(Y'(V)))}
\subset \overline{P(\phi') Y'(V')} 
= Y(W'). 
\end{align*}
So $Y$ is a closed $\bVec$-subvariety of $X$.  Furthermore, the
discussion before Lemma~\ref{lm:Yi} implies that $Y(W)$ is contained in
$\oSing(X(W))$ for $\dim(W) \geq m_i + \dim(U)$. Finally, for $W$ of
this dimension, we may fix any isomorphism $\phi:U \oplus V \to W$,
and then $Y(W)=\overline{P(\phi)Y'(V)}$ by the independence of $Y(W)$
of the choice of $\phi$. The last statement of the lemma now follows
from the fact that $Y'(V)=\oSing(X(U \oplus V)[1/h])$.
\end{proof}

Finally, we claim that 
\[
Y_1(V) \cup \cdots \cup Y_k(V) \cup Z(V) = \oSing(X(V)).
\]
for all $V$ with $\dim(V) \gg 0$.
Indeed, the inclusion $\subset$ follows from Lemma~\ref{lm:Z} and Lemma~\ref{lm:Yi}.
For the other inclusion, pick a point $p\in \oSing(X(V))$.
If $p \in Z(V)$, then we are done. Otherwise, for some $i \in \{ 1,
\dots, k\}$ there exists a $\psi: V \to U_i$ such that $h_i(P(\psi)(p))\neq 0$, and therefore $p \in Y_i(V)$.
In conclusion, $Y\coloneqq  Y_1 \cup \cdots \cup Y_k \cup Z$ is the $\bVec$-subvariety of
$X$ satisfying the conditions of Theorem~\ref{thm:mainintro}. 
\end{proof}

We recall the notation $\Xsing \subset X$ for the unique $\bVec$-subvariety $Y$ from the Theorem~\ref{thm:mainintro}.

\begin{proof}[Proof of Theorem~\ref{thm:LocalEqs} for irreducible
$X$.]
We use the notation, intermediate results, and global induction structure
from the previous proof. First assume that $X=X' \times R$. By the
induction assumption, there exists a $U$
is such that for all $V$, the reduced locus of the scheme defined by
the ideal of $X'(U) \subseteq P'(U)$ pulled back to $P'(V)$ contains
the nonsingular locus of $X'(U)$. Then $U$
has the same property for $X \subseteq P$. 

So we may assume that case (1) of Theorem~\ref{thm:Embedding} does not
apply to any irreducible subfunctor $R$ of the top-degree part $P_d$
of $P$. We then construct $Z,Y_1,\ldots,Y_k \subseteq X$ as in the
proof above, so that for $V$ sufficiently large, $\oSing(X(V))=Y_1(V)
\cup \cdots \cup Y_k(V) \cup Z(V)$. By induction, we may further assume
that Theorem~\ref{thm:LocalEqs} holds for the embedded closed
$\bVec$-subvarieties
$Z_i \subseteq K^1 \oplus (\oSh_{U_i} P)/R_i=:Q_i$. 

Now let $V$ be of sufficiently large dimension and let $p \in X(V)
\setminus \oSing(X(V))$. Then $p$ is not in $Z(V)$, so there exists an $i$
and a $\phi:V \to U_i$ so that $h_i(P(\phi)(p)) \neq 0$. Since this is
an open condition and since $\dim(V)$ is assumed to be sufficiently
large, we may assume
that $\phi$ is surjective, and hence that $V=U_i \oplus W$ with $\phi$
the projection onto $U_i$ with kernel $W$. We can therefore regard $p$
as a point of $(\oSh_{U_i}X)(W)[1/h_i]$. Let $\pi$ be the map $(\oSh_{U_i}
P)[1/h_i] \to Q_i$ whose second component is the projection along $R_i$ and
whose first component is the function $1/h_i$, so $\pi$ restricts to an
isomorphism $(\oSh_{U_i} X)[1/h_i] \to Z_i$. Hence $\pi(p)$ is a nonsingular
point of $Z_i$. 

Let $d = \dim Q_i(W) - \dim_{\pi(p)} Z_i(W)$. By induction, there exist $U \in \bVec$, $\phi_1, \ldots,\phi_d \in \oHom(W,U)$, $g_1,\ldots,g_d \in I(Z_i(U))$ and $x_1,\ldots,x_d \in Q_i(U)^*$ such that
\[
    \det (\partial (g_j \circ Q_i(\phi_j))/\partial x_k)(p) \neq 0.
\]
Pulling back to $(\oSh_{U_i} P)[1/h_i]$ and multiplying by a suitable power of $h_i$ we may assume by abuse of notation that $g_1,\ldots,g_d \in I(\oSh_{U_i}(X)(U))$ with the above property. Write $\widetilde{g}_j \coloneqq g_j \circ P(\id_{U_i} \oplus \phi_j)$ for $j = 1, \ldots, d$.

Finally, let $x_{d+1},\ldots,x_c$ be a basis of $R_i(W)^*$, and recall
that $h_i = \frac{\partial f_i}{\partial r_i}$ for some $f_i$ in the
ideal of $X(U_i)$ and some $r_i \in R(U_i)$. The proof of the Embedding
Theorem in \cite{draisma} shows that the linear span of the pull-backs of $f_i \circ P(\psi)$,
where $\psi$ runs over all linear maps $U_i \oplus W \to U_i$ that are
the identity on $U_i$, contains polynomials of the form
\[ 
    \widetilde{g}_j \coloneqq x_j h_i + r_j \text{ for all } j=d+1,\ldots,c, 
\]
where $r_j \in K[P(U_i \oplus W)/R(W)]$ does not involve the variables
$x_{d+1},\ldots,x_c$. Then $\widetilde{g}_1,\ldots,\widetilde{g}_c$ are elements of $I_V$ such that $\det (\partial \widetilde{g}_j/\partial x_k)(p) \neq 0$. Moreover, we have $c = d + \dim R_i(W) = \dim P(V) - \dim_p X(V)$, so by the Jacobian criterion we are done.

\end{proof}

\section{\texorpdfstring{$\bVec$}{Vec}-varieties of linear type}\label{sec:lineartype}

Recall from Proposition~\ref{prp:dimensionfunctionVecvars} that for a $\bVec$-variety $X$, $\dim(X(K^n))$ is a polynomial
in $n$ for all sufficiently large $n$. In the remaining sections, we
study those $\bVec$-varietes $X$ for which this polynomial is linear.

\begin{dfn}
The $\bVec$-variety $X$ is said to be {\em of linear type} if there
exist $d,b \in \bQ$ such that $\dim(X(K^n))=dn+b$ for all $n \gg 0$.
\end{dfn}

For $\bVec$-varieties $X$ of linear type we will find an explicit number
$n$ such that $\Xsing(V)=\oSing(X(V))$ for all $V$ with $\dim(V) \geq
n$ (in this section) and describe a weak resolution of singularities
(in the next section). For both of these purposes, it will be convenient to 
switch freely between the following different characterisations of linear-type
$\bVec$-varieties.

\begin{prp}\label{prp: equivalent conditions linear type}
Let $X$ be an irreducible $\bVec$-variety and let $d \in \bbN$.
The following conditions are equivalent:
    \begin{enumerate}

	\item $X$ admits a dominant morphism $B \times (S^1)^d \to X$
	for some finite-dimensional variety $B$. 

	\item For every vector space $V \in \bVec$ we have
	$X(V)=\oHom(K^d,V) \cdot X(K^d)$, where the right-hand side is short-hand notation
	for the  image of the morphism $\oHom(K^d,V)
	\times X(K^d) \to X(V),\ (\phi,p) \mapsto X(\phi)(p)$.

	\item $X$ is of linear type and the coefficient of $n$ in the linear polynomial 
	$n \mapsto \dim(X(K^n))$ (for $n \gg 0$) is $\leq d$.
    \end{enumerate}
\end{prp}

\begin{proof}
$(1) \implies (2)$: Let $\alpha:B \times (S^1)^d \to X$ be a dominant
morphism. Then for any point $p \in X(V)$ in the image of
$\alpha_V$, there exists a field extension $L$ of $K$ such that we can write $p=\alpha(b,v_1,\ldots,v_d)$ where $v_i \in V \otimes_K L$
and $b$ is a $L$-point of $B$. Let $\phi$
be the $L$-point of $\oHom(K^d,V)$ sending $e_i$ to $v_i$. Then it
follows that $p=X_L(\phi)(\alpha_{L^d}(b,e_1,\ldots,e_d))$. Since $\alpha$ is dominant,
we find that $\oHom(K^d,V) \cdot X(K^d)$ is dense in $X(V)$.

For $\dim(V)<d$ we already know that $\oHom(K^d,V) \cdot X(K^d)$ is all of $X(V)$---indeed,
$X(\phi):X(K^d) \to X(V)$ is surjective for any surjective linear map
$\phi:K^d \to V$. Now assume that $\dim(V) \geq d$. Then, since any
linear map $K^d \to V$ factors as $\iota \circ \psi$ for some
$\psi:K^d \to K^d$ and some injective $\iota:K^d \to V$, and since $X(K^d)$ is
mapped into itself by $X(\psi)$, we have
\[ \oHom(K^d,V) \cdot X(K^d)=\oHom^0(K^d,V) \cdot X(K^d), \]
where $\oHom^0(K^d,V) \subset \oHom(K^d,V)$ is the open subscheme of {\em
injective} maps. Now the right-hand side is also the locus of points $p$
where the space $U_p$ from Lemma~\ref{lm:UpSemicont} has dimension $\leq
d$, hence closed by semicontinuity. Together with the conclusion of the previous paragraph,
this implies that $\oHom(K^d,V) \cdot X(K^d)=X(V)$.

$(2) \implies (3)$: For any $n$ we have a surjective
morphism $\oHom(K^d,K^n) \times X(K^d) \to X(K^n)$. The left-hand side
has dimension $dn + \dim(X(K^d))$, and hence $X$ is of linear type and the coefficient of $n$ in the 
dimension polynomial of $X$ is at most $d$.

$(3) \implies (1)$: By Proposition~\ref{prp:openisom}, we have $(\oSh_U
X)[1/h] \cong B \times P$ for some pure polynomial functor $P$ and some
irreducible affine variety $B$. Since the dimension polynomial of $X$ is
linear, so is that of $P$, and they have the same coefficient of $n$, so
that $P=(S^1)^e$ for some $e \leq d$. We have a surjective morphism
$\oSh_U
X \to X$, which, evaluated at $V \in \bVec$, is the map $X(U \oplus V)
\to X(U)$ corresponding to the projection $U \oplus V \to V$ with kernel
$U$. Pre-composing this with the projection $B \times (S^1)^d \to B \times
(S^1)^e$ and the isomorphism $B \times (S^1)^e \to (\oSh_U
X)[1/h]$ gives the desired dominant morphism $B \times (S^1)^d \to X$.
\end{proof}

As a consequence of Proposition~\ref{prp: equivalent conditions linear type}, 
the minimal number $d \in \bbN$ for which each of the conditions
holds is the same for all.

\begin{rmk}
    If $L\supset K$ is a field extension, then clearly $X$ is of linear type if and only if $X_L$ is of linear type; and in this case the minimal numbers $d$ from Proposition~\ref{prp: equivalent conditions linear type} agree, as the dimension polynomials of $X$ and $X_L$ agree.
\end{rmk}

\begin{xmp}
Many varieties of tensors studied in applications are of linear type. For
instance, the variety of order-$m$ tensors of border tensor rank at most
$r$ is the image closure of the morphism
\[ (S^1)^{mr} \to T^m, (u_{ij})_{i \leq r, j \leq d} \mapsto \sum_{i=1}^r
u_{i1} \otimes \cdots \otimes u_{im}, \]
which by Proposition~\ref{prp: equivalent conditions linear type}
is of linear type; the minimal $d$ equals $mr$ here.
An example of a variety of tensors not of linear type is the variety of tensors of order $d\geq 3$ of bounded slice rank; see Example~\ref{xmp:slicerank}.
\end{xmp}

Let $X$ be an irreducible $\bVec$-variety $X$ of linear type
and let $d$ be as in Proposition~\ref{prp: equivalent conditions
linear type}. Assume that $X$ is a closed $\bVec$-subvariety of the
polynomial functor $Q$. Then the assignment that sends $V \in \bVec$
to the linear span of $X(V)$ in $Q(V)$ is a subfunctor of $Q$. Assume that it is all of $Q$, i.e., that {\em $X$ spans $Q$}.
    
\begin{lm} \label{lm:lengthatmostd}
If the linear-type $\bVec$-variety $X$ spans $Q$, then 
every partition $\lambda$ for which the Schur functor
    $\bbS_\lambda$ appears in $Q$ has length $\leq d$. 
\end{lm}

\begin{proof}
    For any $V \in \bVec$ and any point $p$ of $X(V)$, by item (2)
    in Proposition~\ref{prp: equivalent conditions linear type} and Lemma~\ref{lm:minimaldimension},
    there exists a field extension $L$ of $K$ and subspace $U \subset V \otimes_K L$ of dimension at most
    $d$ such that $p \in X_L(U) \subset Q_L(U) \subset Q_L(V \otimes_K L)$. For
    all $\lambda$ of length $>d$ we have $\bbS_\lambda(U)=0$ by
    \cite[Theorem~6.3]{fulton-harris:reprfirstcourse}, so since $X$
    spans $Q$, these do not appear in $Q$.
\end{proof}

\begin{thm}\label{prp:F(d,c)}
    Let $X$ be an irreducible $\bVec$-variety of linear type that admits
    a dominant morphism from $B \times (S^1)^d$, where $B$ is an irreducible
    affine variety of dimension $c$. 
    Then there exists a number $F(d, c)$, depending only on $d$ and on $c$, 
    such that for every vector space $V$ of dimension $d + k$ with $k  \geq F(d,c)$, we have that $X^\tsing(V) = \oSing(X(V))$.
\end{thm}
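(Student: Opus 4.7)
The strategy is to leverage the explicit structure provided by the linear-type hypothesis to make the implicit threshold in the proof of Theorem~\ref{thm:mainintro} effective in $d$ and $c$. By Proposition~\ref{prp: equivalent conditions linear type}, for any $V$ we have $X(V) = \oHom(K^d, V) \cdot X(K^d)$, i.e., every point of $X(V)$ is the image of a point of $X(K^d)$ under a linear map $K^d \to V$. Moreover, by Lemma~\ref{lm:lengthatmostd}, if one assumes (as one may) that $X$ spans its ambient polynomial functor $Q$, then $Q$ involves only Schur functors of length at most $d$, which together with the linear-type hypothesis constrains the possible $Q$ severely.

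I would proceed by induction on the ambient polynomial functor $Q$ under the well-founded order $\prec$ of Section~\ref{sssec:orderpolyfun}, mirroring the proof of Theorem~\ref{thm:mainintro}. At each step either the cylindrical case of the Embedding Theorem applies, so $X \cong X' \times R$ with $R \subseteq Q_d$ an irreducible Schur functor, which for linear-type $X$ forces $R \cong (S^1)^{d'}$ for some $d' \leq d$ and reduces immediately to $X'$ with parameters bounded by $(d-d', c)$; or the shift-and-localise case, where Proposition~\ref{prp:openisom} gives $U \in \bVec$, a nonzero $h \in K[X(U)]$, an irreducible affine variety $B'$, and a polynomial functor $P'$ with $(\oSh_U X)[1/h] \cong B' \times P'$. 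Here the linear-type property forces $P' \cong (S^1)^{d'}$ for some $d' \leq d$, with $\dim B'$ bounded in terms of $c$ and $\dim U$. On this open chart the singular locus decomposes as $\oSing(B') \times V^{d'}$, and both $\Xsing$ and $\oSing(X)$ coincide with the preimage of $\oSing(B')$ as soon as $V$ is large enough for the chart to be defined, i.e., $\dim V \geq \dim U$. The complement $\{h = 0\}\subseteq X$ is a proper closed $\bVec$-subvariety, still of linear type with parameters controlled by $(d, c)$, sitting inside a polynomial functor strictly smaller than $Q$ under $\prec$, so induction applies.

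The hard part will be to bound the shift depth $\dim U$ explicitly in terms of $d$ and $c$, since the threshold at each induction step is shifted by this amount. A careful reading of the proof of the Embedding Theorem should show that the required $\dim U$ is controlled by the degrees and numbers of generators of the vanishing ideal of $X(V)$ needed to witness case~(2); for linear-type $X$ these quantities are bounded uniformly in $d$ and $c$ via the dimension function, so combining the resulting recursive estimate with the well-founded induction yields an explicit $F(d, c)$. A less quantitative fallback is to observe that irreducible $\bVec$-varieties of linear type with fixed parameters $(d, c)$ form a bounded family up to isomorphism, since they are classified by dominant morphisms from $B \times (S^1)^d$ with $\dim B \leq c$; a compactness argument on this family then yields a uniform $F(d, c)$ without an explicit formula.
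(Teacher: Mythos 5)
There is a genuine gap. Your plan is to rerun the induction behind Theorem~\ref{thm:mainintro} (Embedding Theorem / shift-and-localise) and make its thresholds effective, but the step you yourself flag as ``the hard part'' is exactly the step that fails: nothing bounds the shift space $U$, the ambient functor, or the defining equations of $X$ in terms of $(d,c)$ alone. Fixing $d$ and $c$ does not bound the degree of the ambient polynomial functor: for every $m$ the cone over the Veronese, i.e.\ the image closure of $S^1 \to S^m$, $v \mapsto v^m$, is irreducible of linear type with $(d,c)=(1,0)$, yet these varieties sit in ambient functors of unbounded degree, and the shift depth and equations needed in case (2) of Theorem~\ref{thm:Embedding} vary with $m$. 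So the assertion that ``these quantities are bounded uniformly in $d$ and $c$ via the dimension function'' is unsubstantiated (and the same family shows your fallback claim, that linear-type varieties with fixed $(d,c)$ form a bounded family amenable to a compactness argument, is not available as stated: $B$ ranges over all varieties of dimension $\le c$ and the parametrising morphisms have unbounded degree). In addition, on a chart $(\oSh_U X)[1/h]\cong B'\times (S^1)^{d'}$ the identification of $\Xsing$ with the preimage of $\oSing(B')$ ``as soon as the chart is defined'' is itself a statement of the type you are trying to prove, and the points with $h=0$ force a recursion whose accumulated shifts are not controlled by $(d,c)$.

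The paper avoids this entirely with a different, non-inductive argument whose key idea is absent from your proposal: by condition (2) of Proposition~\ref{prp: equivalent conditions linear type}, every point of $X(V)$ with $\dim V=d+k$ comes from a point $p\in X(K^d)$ via an injection $K^d\hookrightarrow V$, and one studies the \emph{tangent space functor} $T_p\colon V\mapsto T_p(X(K^d\oplus V))$, a polynomial subfunctor of $\oSh_{K^d}P$. Assuming $X$ spans $P$, Lemma~\ref{lm:lengthatmostd} and Remark~\ref{rmk:boundedlength} show all Schur constituents of $T_p$ have length $\le d$, so $f_{T_p}$ is a sum of Schur dimension functions of length $\le d$; unless $T_p$ has degree one with slope exactly $d$, these grow at least like $\binom{k}{d'}$, $\binom{k+1}{2}$, or $(d+1)k$, hence eventually exceed $f_X(d+k)\le d(d+k)+c$, with an explicit threshold $F(d,c)$ depending only on these numerical comparisons; in the remaining degree-one case $f_{T_p}(k)-f_X(d+k)$ is a constant, so the dichotomy (nonsingular for all $V$ versus singular for all large $V$, by Lemma~\ref{lm:SingRemainsSing}) is detected already at $k\ge F(d,c)$. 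This dimension-count on $T_p$ is what makes the bound depend only on $(d,c)$, and it is the idea your argument would need to replace the unproven effective Embedding Theorem.
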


Following the argument in
Section~\ref{ssec:reduction-algebraically-closed}, it suffices to
prove the statement for $X_{\ol{K}}$, where $\ol{K}$ denotes an algebraic closure of $K$. Thus, for the proof of Theorem~\ref{prp:F(d,c)}, we may assume that $K$ is algebraically closed and that any point is $K$-rational.

\begin{proof}
    We may take $d$ minimal, so that by Proposition~\ref{prp: equivalent
    conditions linear type} we have $f_{X}(n)=d \cdot n + c'$ for all
    $n \gg 0$, where $c'$ is a constant $\leq c$. Moreover, we will see
    in Lemma~\ref{lm:dimlineartype}, that this formula holds for all $n
    \geq d$.

    Assume that $X$ is a closed $\bVec$-subvariety of the polynomial
    functor $P$. Without loss of generality, we may further assume that
    $X$ spans $P$. To any point $p$ of $X(K^d)$, we associate to $p$ a
    functor $T_p: \bVec \to \bVec$ as follows. First, we regard $p$ as a
    point in $X(K^d \oplus V) \subset P(K^d \oplus V)$ for every $V \in
    \bVec$ via the natural inclusion $P(K^d) \to P(K^d \oplus V)$. Then
    $T_p$ sends $V$ to $T_p(X(K^d\oplus V))$ inside $T_p(P(K^d
    \oplus V)) = P(K^d \oplus V)$.  The functor $T_p$ is a polynomial
    functor since the tangent space is a vector space, and for every
    $\phi: V \to W$ the differential of the map $P_{\oid_{K^d} \oplus \phi}$
    is $P_{\oid_{K^d} \oplus \phi}$ itself and it maps $T_p(X(K^d\oplus V))$
    to $T_p(X(K^d\oplus W))$.

    Let $f_{T_p}$ and $f_X$ be the dimension functions of $T_p$ and
    $X$, respectively. Now by Lemma~\ref{lm:SingRemainsSing}, either $p$
    is a nonsingular point in $X(K^d \oplus V)$ for all $V$, in which case
    $f_X(d+k)=f_{T_p}(k)$ for all $k$, or else $p$ is a singular point
    in $X(K^d \oplus V)$ for all $V$ of sufficiently large dimension,
    and then $f_X(d+k)<f_{T_p}(k)$ for $k \gg 0$. The following claim expresses
    that this dichotomy can be tested at a value of $k$ that depends only on
    $c$ and $d$.
    
    \begin{clm}\label{clm:inner}
        There exists a number $F(d,c)$ depending only on $d$ and on $c$ such that either for all $k \geq  F(d,c)$ we have $f_{T_p}(k)> f_X(d+k)$ or for all $k \geq F(d,c)$ we have $f_{T_p}(k) = f_X(d+k)$.
    \end{clm}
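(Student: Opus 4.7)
The plan is to analyse $g_p(k) \coloneqq f_{T_p}(k) - f_X(d+k)$ as a polynomial in $k$. First I would note that $g_p$ is indeed a polynomial: since $T_p$ is a polynomial functor (as observed just before the claim), $f_{T_p}(k)=\dim T_p(K^k)$ is polynomial in $k$ by Proposition~\ref{prp:dimensionpolyfun}, while linear type gives $f_X(d+k)=dk+(d^2+c')$. Because $T_p(K^k)$ is the embedded tangent space of $X(K^d\oplus K^k)$ at $p$ and $X$ is irreducible, $g_p(k)\geq 0$ for all $k\in\bbN$, with equality exactly when $p$ is smooth in $X(K^d\oplus K^k)$.

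Next I would invoke Lemma~\ref{lm:SingRemainsSing}: singularity persists along injective linear embeddings $K^{d+k}\hookrightarrow K^{d+k'}$, so the set $\{k\in\bbN:g_p(k)=0\}$ is downward-closed in $\bbN$. A polynomial with infinitely many zeros is zero; hence either $g_p\equiv 0$, giving the equality branch of the dichotomy for every $k$, or there exists a threshold $k^*(p)$ with $g_p(j)=0$ for $j<k^*(p)$ and $g_p(j)>0$ for $j\geq k^*(p)$. The dichotomy then holds with $F=k^*(p)$, and the remaining task is to bound $k^*(p)$ uniformly in $p$.

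For the uniform bound I would assume without loss of generality that $X$ spans $P$; then by Lemma~\ref{lm:lengthatmostd} and Remark~\ref{rmk:boundedlength} all Schur components of $\oSh_{K^d}P$ have length $\leq d$. Since $\oGL(V)$ fixes $p\in P(K^d)\subset P(K^d\oplus V)$ and preserves $X(K^d\oplus V)$, the tangent space $T_p(V)$ is $\oGL(V)$-stable, so $T_p$ is a subfunctor of $\oSh_{K^d}P$ whose Schur components also have length $\leq d$. Expanding $f_{T_p}(k)=\sum_\lambda m_\lambda^p \, s_\lambda(k)$ with $s_\lambda(k)=\dim \bbS_\lambda(K^k)$ and $m_\lambda^p\in\bbN$, and further each $s_\lambda(k)=\sum_{m=\ell(\lambda)}^{|\lambda|}a_\lambda^{(m)}\binom{k}{m}$ in the binomial basis with $a_\lambda^{(m)}>0$ in the stated range (a classical fact: $a_\lambda^{(m)}$ equals the number of semistandard Young tableaux of shape $\lambda$ using exactly $m$ distinct entries), one obtains $f_{T_p}(k)=\sum_m c_m^p\binom{k}{m}$ with $c_m^p=\sum_\lambda m_\lambda^p a_\lambda^{(m)}\geq 0$, while $f_X(d+k)=(d^2+c')\binom{k}{0}+d\binom{k}{1}$. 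Suppose for contradiction $k^*(p)>\max(d,2)$; then $g_p(j)=0$ for $j=0,1,\ldots,\max(d,2)$, and evaluating these conditions inductively (using $\binom{j}{m}=0$ for $m>j$ and $\binom{j}{j}=1$) forces $c_0^p=d^2+c'$, $c_1^p=d$, and $c_m^p=0$ for $m=2,\ldots,\max(d,2)$. The positivity of $a_\lambda^{(m)}$ then forces $m_\lambda^p=0$ for every $\lambda$ with $|\lambda|\geq 2$ and $\ell(\lambda)\leq d$: for $\ell(\lambda)\geq 2$ use $m=\ell(\lambda)\in\{2,\ldots,d\}$, and for $\lambda=(n)$ with $n\geq 2$ use $m=2$. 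Hence $T_p$ has only $\bbS_{()}$ and $\bbS_{(1)}$ components, $f_{T_p}$ is linear, and $g_p\equiv 0$, contradicting $k^*(p)<\infty$. So $k^*(p)\leq\max(d,2)$ and one may take $F(d,c)=\max(d,2)$.

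The main obstacle I anticipate is verifying the positivity $a_\lambda^{(m)}>0$ for $\ell(\lambda)\leq m\leq|\lambda|$ in a self-contained way; via the SSYT interpretation this reduces to showing that for every such $m$ a semistandard tableau of shape $\lambda$ using exactly $m$ distinct values exists, which requires a short but not entirely obvious combinatorial construction.
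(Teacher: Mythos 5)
Your proof is correct, but it takes a genuinely different route from the paper's. The paper's proof of the claim is a coarse case analysis on the ``shape'' of $T_p$: if $T_p$ contains a Schur functor of length $d'\geq 2$ it bounds $f_{T_p}(k)\geq\binom{k}{d'}$; if not but $\deg T_p\geq 2$ it bounds $f_{T_p}(k)\geq\binom{k+1}{2}$; if $T_p$ is linear with slope $>d$ it bounds $f_{T_p}(k)\geq (d+1)k$; in each case it picks the first $k$ where the lower bound exceeds $d(d+k)+c$, and in the remaining case (linear of slope exactly $d$) the difference is a nonnegative constant. This yields an $F(d,c)$ genuinely depending on $c$ and of roughly square-root size in $d^2+c$. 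You instead exploit two extra structural facts: (i) by Lemma~\ref{lm:SingRemainsSing} the set of $k$ with $f_{T_p}(k)=f_X(d+k)$ is downward closed, and (ii) writing $f_{T_p}$ in the binomial basis via SSYT counts, all coefficients are nonnegative and every Schur component $\bbS_\lambda$ with $|\lambda|\geq 2$ (which has $\ell(\lambda)\leq d$ by Lemma~\ref{lm:lengthatmostd} and Remark~\ref{rmk:boundedlength}) contributes positively to some coefficient $c_m$ with $2\leq m\leq\max(d,2)$. Interpolating the equality $f_{T_p}(j)=dj+d^2+c'$ at $j=0,\dots,\max(d,2)$ then forces $T_p$ to be linear and the difference to vanish identically, so $F(d,c)=\max(d,2)$ works --- a sharper bound, independent of $c$, which speaks to the question raised in the paper's final remark about more explicit bounds. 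Both arguments rely on the forward reference $f_X(d+k)=dk+d^2+c'$ for all $k\geq 0$ (Corollary~\ref{lm:dimlineartype}, proved independently of this claim), so there is no circularity; your only outstanding verification debt is the classical positivity of the binomial-basis coefficients of $k\mapsto\dim\bbS_\lambda(K^k)$ for $\ell(\lambda)\leq m\leq|\lambda|$, which indeed follows from standard Kostka-number positivity ($K_{\lambda\mu}>0$ whenever $\mu$ is a partition of $|\lambda|$ with $m$ parts dominated by $\lambda$, and such a $\mu$ exists in the stated range), so this is a genuine but routine addition rather than a gap.
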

    
    \begin{proof}[Proof of Claim~\ref{clm:inner}]
    Note that $T_p$ is a subfunctor of $\oSh_{K^d}P$.
    By Lemma~\ref{lm:lengthatmostd}, $P$ does not contain any Schur functors
    $\bbS_\lambda$ for which $\lambda$ has length strictly larger than $d$. 
    Hence, by Remark~\ref{rmk:boundedlength}, neither does $\oSh_{K^d}P$.
    In particular, the dimension function $f_{T_p}$ is a sum of dimension
    functions $f_{\bbS_\mu}$ for partitions $\mu$ whose length $d'$ is at most
    $d$. Using the dimension formula for $\dim(\bbS_{\mu}(V))$ from 
    \cite[Theorem~6.3]{fulton-harris:reprfirstcourse}, one verifies readily that 
    $f_{\bbS_{\mu}}(k) \geq f_{\Wedge^{d'}}(k) = \binom{k}{d'}$ for all $k$.

    If $T_p$ contains a Schur functor $\bbS_\mu$
    with $\mu$ of length $d'>1$, then $f_{T_p}(k) \geq f_{\Lambda^{d'}}(k)
    = \binom{k}{d'}$ holds for every $k$. Let $n_{d'}$ be the minimal integer
    such that for every $k \geq n_{d'}$ we have $\binom{k}{d'} > d(d+k) + c$.

    If $T_p$ contains no $\bbS_\mu$ with $\mu$ of length $>1$ but
    $T_p$ has degree strictly bigger than one, then $T_p$ is 
    a sum of symmetric powers, and $f_{T_p}(k) \geq f_{S^2} (k)=\binom{k+1}{2}$ for all $k$. 
    Let $n_0$ be the minimal integer such that for every $k  \geq n_0$
    we have $\binom{k+1}{2} > d(d+k) + c$.
    
    If $T_p$ has degree one, then $f_{T_p}(k) = d'' k + c''$ for some
    non-negative constant $c''$.  If $d''>d$, then $f_{T_p}(k) \geq
    (d+1)k$. Let $n_1$ be such that for all $k  \geq n_1$ we have $(d+1)k
    > d(d+k) + c$.

    Now set $F(d,c) \coloneqq \max \{n_0, n_1, n_2, \dots, n_d\}$. By
    construction, in any of the above cases, we have $f_{T_p}(k) > d(d+k)
    + c \geq \dim(X(K^d \oplus V))$ for any space $V$ of dimension $k
    \geq F(d,c)$.

    If $T_p$ has degree one and $d''=d$, then since $\dim(X(K^d \oplus
    V))= d(d+\dim(V))+c'$ for all $V$, the difference $f_{T_p}(k)-f_{X}(d+k)$
    is a constant nonnegative function for all $k \geq 0$, hence either
    identically $0$ or identically $>0$.
    \end{proof}

    To conclude the proof of Theorem~\ref{prp:F(d,c)}, let $k$ be
    at least the number $F(d,c)$ from the claim, and let $V \in \bVec$ be  of dimension $n = d + k$. We argue that $X^\tsing
    (V) = \oSing(X(V))$. The inclusion $\supset$ always holds by
    Theorem~\ref{thm:mainintro}. For the converse, consider a point
    $p$ of $X^\tsing(V)$. Then, by Proposition~\ref{prp: equivalent
    conditions linear type} there is a point $p' \in X(K^d) \subset
    X(K^d \oplus V')$, for any vector space $V'$ of dimension $\dim(V)-d$, and an isomorphism $\phi: K^d\oplus V' \to V$
    such that $P(\phi)(p') = p$. Then it follows that $p' \in X^\tsing(K^d)$,
    so that the dimension of $T_{p'}(W)(=T_{p'}(X(K^d \oplus W)))$ is strictly greater than
    $\dim(X(K^d \oplus W))$ for all $W$ with $\dim(W) \gg 0$. 
    The property of $F(d,c)$ then implies that this inequality already holds for
    $W=V'$, so that $f_{T_{p'}}(k) > f_X(d+k)$.
    We conclude that $p' \in \oSing(X(K^d \oplus V'))$ and hence $p \in
    \oSing(X(V))$.
\end{proof}

The following example illustrates some of the reasoning and notation above. 

\begin{xmp} \label{xmp:rankr}
    Let $T^2$ be the polynomial functor given by taking the second tensor power, namely, for $V \in \bVec$ we have $T^2(V) \coloneqq V \otimes V$.
    Let $\cM_{\leq r}\subset T^2$ be the $\bVec$-subvariety of matrices
    of rank $\leq r$. This admits a dominant (indeed, surjective) morphism
    \[ (S^1)^{2r} \to \cM_{\leq r},\ (u_1,v_1,\ldots,u_r,v_r) \mapsto \sum_{i=1}^r u_i
    \otimes v_i. \]
    Since also $\dim(\cM_{\leq r}(K^n))=n^2 - (n-r)^2= 2rn - r^2$, where the coefficient
    of $n$ is $2r$, $d\coloneqq2r$ is the minimal exponent of $S^1$ that we can
    choose in such a dominant morphism. Furthermore, $\oSh_{K^d}T^2 = T^2(K^d) \oplus (S^1)^{2d} \oplus \bigwedge^2 \oplus S^2$.
    Now pick a point $p \in \cM_{\leq r}(K^{d})$, i.e., a $d \times d$-matrix,
    and regard $p$ as a point in $\cM_{\leq r}(K^d \oplus V)$ for any $V$. If $p$
    has rank precisely $r$, then the minimal spaces $U_1,U_2 \subset
    K^d$ with $p \in U_1 \otimes U_2$ satisfy $\dim(U_1)=\dim(U_2)=r$,
    and we find that
    \[ T_p(\cM_{\leq r}(K^d \oplus V))=U_1 \otimes (K^{d} \oplus V) + (K^{d} \oplus V)
    \otimes U_2
    \]
    where the intersection of the two spaces on the
    right-hand side equals $U_1 \otimes U_2$. We then find that
    $f_{T_p}(k)=2r(d+k)-r^d=f_{\cM_{\leq r}}(d+k)$. On the other hand, if $p$ has
    rank $<r$, then adding any rank-$1$ matrix to $p$ we stay within $X$, and
    since the space of matrices is spanned by rank-$1$ matrices, 
    $T_p$ is all of $\oSh_{K^d} T^2$.
\end{xmp}

\comment{
\begin{rmk}
    Equivalently, Theorem~\ref{prp:F(d,c)} says that there exists a function $F(d, c)$ with the following property.
    Let $V$ be a vector space of dimension $d + k$ such that $k > F(d, c)$.
    Then, if $p \in X(V)$ satisfies $P(\iota)(p) \in \oSing(X(V \oplus V'))$ for some $V'\in \bVec$ via the inclusion $\iota: V \to V \oplus V'$, then $p \in \oSing(X(V))$.
\end{rmk}
https://arxiv.org/pdf/1204.0488
}

\section{Weak resolution}\label{sec:weakresolution}

Given a linear-type $\bVec$-variety $X$, we want to assign to each $V\in \bVec$ a smooth reduced scheme $\Omega(V)$ and a morphism $\pi_V: \Omega(V) \to X(V)$ such that $\pi_V$ is a $\oGL(V)$-equivariant resolution of singularities.
Moreover, we would like that the assignment $\Omega$ behaves as close to a functor as possible.
More specifically we want $\Omega$ to be behave well with  respect to injective linear maps.
Our set-up doesn't allow this much, but something going in this direction. 
First of all, we start with looking at the following example.
\begin{xmp}\label{xmp:resolutionmatrices}
    Building on Example~\ref{xmp:singularlocusboundedmatrices} and
    Example~\ref{xmp:rankr}, we note that for every $V \in \bVec$ there is a $\oGL(V)$-equivariant resolution of singularities.
    Consider the $\bVec$-variety $\cM_{\leq r}$ that maps each $V\in \bVec$ in the subspace of $T^2(V) = V\otimes V$ of matrices of rank at most $r$.
    Then for every $V \in \bVec$ the space
    \[
    \Omega(V) \coloneqq \{(p, U, W) \mid p \in U \otimes W\} \subset V\otimes V \times \oGr(r, V)\times \oGr(r, V)
    \]
    together with the projection $\pi_V$ on the first component, is a resolution of singularity of $\cM_{\leq r}(V)$.
    Indeed, $\Omega(V)$ is smooth as it is a vector bundle on $\oGr(r, V) \times \oGr(r, V)$.
    It is proper because the Grassmannians are complete.
    Finally, there is an obvious inverse defined on the smooth locus: a rank $r$ matrix uniquely defines $U$ and $W$ as, respectively, the row-span and the column-span.
    The assignment $\Omega$ is also semi-functorial: every injective linear map $\phi: V \to V'$ defines a map $\Omega(\phi): \Omega(V) \to \Omega(V')$ sending the triple $(p, U , W)$ to $(T^2(\phi)(p), \phi(U), \phi(W))$, and additionally the assignment $\pi$ makes the following diagram commute:
    \begin{center}
        \begin{tikzcd}
            \Omega(V) \arrow[r, "\Omega(\phi)"] \arrow[d, "\pi_V"] & \Omega(V') \arrow[d, "\pi_{V'}"]\\
            X(V) \arrow[r, "X(\phi)"] & X(V'). 
        \end{tikzcd}
    \end{center}
\end{xmp}

\subsection{Preliminaries on torsors} \label{ssec:Torsors}

Let $Y,\Gamma$ be varieties over $K$ and let $G$ be an algebraic group over
$K$ that acts from the left on $Y$. For us, a morphism $\pi:Y \to \Gamma$
is called a {\em $G$-torsor} if  $\pi$ is constant on $G$-orbits and
moreover $\Gamma$ can be covered by $G$-stable Zariski-open subsets $U$
for which there exists a $G$-equivariant isomorphism $\iota:\pi^{-1}(U) \to G
\times U$, where $G$ acts by left multiplication on the left factor,
such that the following diagram:
\[ \xymatrix{
\pi^{-1}(U) \ar[r]^\iota \ar[d]_{\pi} & G \times U  \ar[dl]\\
 U, &  }
\]
where the diagonal map is the projection, commutes. (Often, local
triviality is required to hold only in the \'etale topology, but for
us, the above more restrictive notion of torsor suffices.) Note that $Y$
is smooth if and only if $\Gamma$ is smooth, and that $\Gamma$ is a categorical
quotient of $Y$ by $G$: every morphism $\phi$ from $Y$ that is
constant on $G$-orbits factors as $\psi \circ \pi$ for a unique morphism
$\psi$. Explicitly, $\psi$ can be defined on $U$ above as
\[ \psi(u)\coloneqq\phi(\iota^{-1}(e,u)); \]
it is straightforward to check that that this is well-defined. We
may therefore write $\Gamma=Y/G$.

Next assume that $G$ also acts from the left on a third variety $Z$. Then
we have a diagonal left action of $G$ on $Y \times Z$, and we claim
that there is a $G$-torsor $\widetilde{\pi}$ from $Y \times Z$ to a suitable
variety $\Omega\eqqcolon(Y \times Z)/G$. To construct $\Omega$,
for every pair $U \subset \Gamma$ and $\iota:\pi^{-1}(U) \to G \times U$
as above, we take a copy of $U \times Z$. When $U',\iota':\pi^{-1}(U')
\to G \times U'$ is another such pair, then set $V\coloneqq U \cap U'$.  The map
$\iota' \circ \iota^{-1}:G \times V \to G \times V$ is $G$-equivariant
and commutes with projection to $V$, hence it is of the form $(g,u)
\mapsto (g  h(u),u)$ for some morphism $h:V \to G$.
We can think of this map as
identifying the open subset $G \times V$ in the chart $G \times U$ of $Y$ 
with the open subset $G \times V$ in the chart $G \times U'$ of $Y$.

We now glue $V \times Z \subset U \times Z$ to $V \times Z \subset
U' \times Z$ via $(u,z) \mapsto (u,h(u)^{-1}z)$. This glueing yields
$\Omega$. The morphism $\widetilde{\pi}: Z \times Y \to \Omega$ is defined
on the chart $(G \times U) \times Z $ of $Y \times Z$ via the map $(g,u,z)
\mapsto (u,g^{-1}z)$ to the chart $U \times Z$ of $\Omega$. This is,
indeed, constant on $G$-orbits, since $g'(g,u,z)=(g'g,u,g'z)$ is
mapped to the same point as $(g,u,z)$.  And $\widetilde{\pi}$ is also
well-defined: for $u \in V=U \cap U'$ the point $(g,u,z)$ corresponds to
the point $(g h(u),u,z)$ in the chart $(G \times U') \times Z$, which
is mapped to $(u,h(u)^{-1} g^{-1} z)$ in the chart $U' \times Z$, and
this corresponds to the point $(u,g^{-1} z)$ in the chart $U \times Z$,
as desired. It is straightforward to verify that $\widetilde{\pi}$ is,
indeed, a $G$-torsor.

Finally, we observe that the map $Y \times Z \to \Gamma,\ (y,z) \mapsto
\pi(y)$ is constant on $G$-orbits, and hence factors via a morphism
$\Omega \to \Gamma$.

\subsection{Construction of $\Omega$} \label{ssec:Omega}
For the remainder of this section let $X$ be an irreducible $\bVec$-variety of linear type and
let $d$ be the minimal number satisfying the properties in Proposition~\ref{prp:
equivalent conditions linear type}. 
Let $\rho: Z \to X(K^d)$ be a $\oGL_d$-equivariant resolution of singularities.
Such a resolution exists by \cite[Corollary~7.6.3]{villamayor}, and \cite[Proposition~7.6.2]{villamayor} guarantees that the action of $\oGL_d$ on $Z$ induced by the lifting of the action on $X(K^d)$ is still an algebraic group action.

Let $V\in \bVec$ be a vector space of dimension at least $d$. In the notation
of Section~\ref{ssec:Torsors}, choose
$Y=Y(V)\coloneqq\oHom^0(K^d, V)$, the open subscheme of $\oHom(K^d,V)$ of
injective linear maps, and $\Gamma=\Gamma(V)\coloneqq\oGr(d,V)$. The morphism $Y
\to \Gamma$ that maps the linear map $\phi \in \oHom^0(K^d,V)$ to
its image $\im(\phi)$, a point in $\oGr(d,V)$, is a well-known $\oGL_d$-torsor with
respect to the action $(g,\phi) \mapsto \phi \circ g^{-1}$.

Consequently, Section\ref{ssec:Torsors} yields a  new torsor
\[ \oHom^0(K^d,V) \times Z \to (\oHom^0(K^d,V) \times Z)/\oGL_d = \Omega(V),\
(\phi,z) \mapsto [\phi,z]. \]
By Section\ref{ssec:Torsors}, the space $\Omega(V)$ comes with a morphism
$\Omega(V) \to \oGr(d,V)$. But in our present context we have a second
morphism from $\Omega(V)$. Indeed, the morphism
\[ \oHom^0(K^d,V) \times Z \to X(V),\ (\phi,z) \mapsto X(\phi)(\rho(z)) \]
is invariant on $\oGL_d$-orbits, because
\[ X(\phi \circ g^{-1})(\rho(gz))=X(\phi \circ g^{-1})(g \rho(z))=X(\phi \circ
g \circ g^{-1}(\rho(z))=X(\phi)(\rho(z)). \]
Here the first equality uses that $\rho: Z \to X$ is $\oGL_d$-equivariant
and the second equality uses that $X$ is a functor.
Note that by Remark~\ref{rmk:image-of-hom} this description holds for all $R$-points with $R$ a $K$-algebra.
Since $\Omega(V)$
is a categorical quotient of $\oHom^0(K^d,V) \times Z$ by $\oGL_d$,
this implies that the morphism factors via a morphism $\rho_V: \Omega(V)
\to X(V)$. We summarise our findings in the following commuting diagram:
\begin{equation} \label{eq:Omega}
\xymatrix{ 
& \oHom^0(K^d,V) \times Z \ar@/_1.5pc/[ddl]_{(\phi,z) \mapsto \im(\phi)}
\ar[d]^{/\oGL_d}
\ar@/^1.5pc/[ddr]^{(\phi,z) \mapsto X(\phi)\rho(z)}&   \\
& \Omega(V) \ar[dl] \ar[dr]_{\rho_V} & \\
\oGr(d,V) & & X(V).
}
\end{equation}
We observe that $\Omega(V)$ is smooth because $\oHom^0(K^d,V) \times
Z$ is smooth and the vertical map is a $\oGL_d$-torsor. Furthermore,
$\Omega(V)$ is {\em semi-functorial} in $V$ in the following sense. Let
$\psi: V \to W$ be any linear map of rank at least $d$. Then for $\phi$
in an open dense subscheme of $\oHom^0(K^d,V)$ we have $\psi \circ \phi
\in \oHom^0(K^d,V)$. We obtain we the following commuting diagram, where
the dashed maps are rational maps:
\begin{equation} \label{eq:OmegaFunc}
\xymatrix
{
\oHom^0(K^d, V) \times Z \ar@{-->}[rr]^{(\phi,z) \mapsto (\psi \circ \phi,z)} 
\ar[d] 
&&
\oHom^0(K^d,W) \times Z \ar[d] \\
\Omega(V) \ar@{-->}[rr]^{[\phi,z] \mapsto [\psi \circ \phi,z]} \ar[d]_{\rho_V} 
&& \Omega(W) \ar[d]^{\rho_W} \\
X(V) \ar[rr]_{X(\psi)} && X(W).
}
\end{equation}

\begin{xmp}
    \label{xmp:incidence-variety}
    Let us revisit the subspace variety in Definition~\ref{dfn:incidence-variety}. That is, for a polynomial functor $P$ and $d\geq0$ we have a $\bVec$-variety $\Lambda_{d,P}$ defined by
    \[
        \Lambda_{d,P}(V) \coloneqq \{p \in P(V) \mid \dim(U_p) \leq d\},
    \]
    where $U_p$ is as in Lemma~\ref{lm:minimaldimension}. Clearly $\Lambda_{d,P}$ is of linear type, with $d$ being the minimal number such that the assertions in Proposition~\ref{prp: equivalent conditions linear type} are fulfilled. Moreover, we have $\Lambda_{d,P}(K^d) = P(K^d)$. So in the above construction of $\Omega$ we may choose $Z = P(K^d)$, and it is straightforward to check that $\Omega(V) \simeq \Gamma_{d,P}(V)$, the $d$-th incidence variety of $P$ (again, see Definition~\ref{dfn:incidence-variety}).
\end{xmp}

\subsection{Weak resolution}
Our goal is to show that the maps $\rho_V: \Omega(V) \to X(V)$,
constructed in Section~\ref{ssec:Omega} and depending semifunctorially on
$V$, are weak resolutions of the varieties $X(V)$ with $\dim(V) \geq
d$. Explicitly, we will prove the following.

\begin{thm} \label{thm:WeakResolution}
Let $X$ be an irreducible $\bVec$-variety of linear type such that the
coefficient  of $n$ in the polynomial $\dim(X(K^n))$ (for $n \gg 0$) is equal to $d \in
\bbN$. For every $V \in \bVec$ of dimension at least $d$, the variety
$\Omega(V)$ constructed in Section~\ref{ssec:Omega} is smooth, and the map
$\rho_V:\Omega(V) \to X(V)$ is proper and birational.
\end{thm}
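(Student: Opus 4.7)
The plan is to verify each of the three assertions---smoothness of $\Omega(V)$, properness of $\rho_V$, and birationality of $\rho_V$---in turn, leveraging diagram~(\ref{eq:Omega}) and the torsor construction of Section~\ref{ssec:Torsors}. Smoothness and irreducibility of $\Omega(V)$ are essentially built into the construction: $Z$ is smooth as a resolution, and $\oHom^0(K^d,V)$ is smooth and irreducible as an open subscheme of an affine space, so their product is; and the morphism $\oHom^0(K^d,V) \times Z \to \Omega(V)$ is a Zariski-locally trivial $\oGL_d$-torsor, along which both properties descend.

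For properness, I would factor $\rho_V$ as the composition
\[
\Omega(V) \xrightarrow{\alpha} \oGr(d,V) \times X(V) \xrightarrow{\pi_2} X(V),
\]
where $\alpha$ combines the projection $\Omega(V) \to \oGr(d,V)$ from diagram~(\ref{eq:Omega}) with $\rho_V$. The second factor $\pi_2$ is proper because $\oGr(d,V)$ is projective, so it suffices to prove $\alpha$ is proper, and properness can be checked Zariski-locally on the target. On a trivialising open $T \subset \oGr(d,V)$ with section $\phi\colon T \to \oHom^0(K^d,V)$, the restriction $\alpha|_T$ is identified with
\[
T \times Z \to T \times X(V), \qquad (u,z) \mapsto (u,\, X(\phi(u))\rho(z)),
\]
which factors as $T \times Z \xrightarrow{(\id,\rho)} T \times X(K^d) \xrightarrow{\gamma} T \times X(V)$ with $\gamma(u,q) = (u,X(\phi(u))q)$. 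The first factor is proper as a base change of the proper resolution $\rho$, and $\gamma$ is a closed embedding onto the restricted incidence variety $\Gamma_{d,X}(V)|_T$ by Proposition~\ref{prp:Easy}(1); hence $\alpha|_T$ is proper.

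For birationality, I would factor $\rho_V$ through the incidence variety as $\Omega(V) \to \Gamma_{d,X}(V) \to X(V)$, where the first arrow sends $[\phi,z] \mapsto (X(\phi)\rho(z),\, \im\phi)$ and the second is the projection onto $X(V)$. Over a trivialising open of $\oGr(d,V)$ the first arrow is (after the same trivialisation) a copy of the birational resolution $\rho\colon Z \to X(K^d)$; and the fibre of the second arrow over $p \in X(V)$ is $\{U \in \oGr(d,V) : U \supset U_p\}$, which is a single point precisely when $\dim U_p = d$, with $U_p$ from Lemma~\ref{lm:minimaldimension}. It then remains to show that the locus $\{p \in X(V) : \dim U_p = d\}$, open by Lemma~\ref{lm:UpSemicont}, is nonempty and hence dense in the irreducible $X(V)$. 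By minimality of $d$ in Proposition~\ref{prp: equivalent conditions linear type}, $X(K^d)$ must contain some $q$ with $\dim U_q = d$---otherwise $X(W) = \Lambda_{d-1,X}(W) = \oHom(K^{d-1},W)\cdot X(K^{d-1})$ for every $W$, contradicting minimality. Embedding $K^d \hookrightarrow V$ then sends such a $q$ to a point $p \in X(V)$ with $\dim U_p = d$, as required.

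The main obstacle, as I see it, is conceptual rather than computational: one must recognise that the minimal subspace $U_p$ of Lemma~\ref{lm:minimaldimension} is the canonical datum that simultaneously pins down the $\oGL_d$-orbit of $\phi$ generically and makes the projection $\Gamma_{d,X}(V) \to X(V)$ birational. Once this is in hand, the rest amounts to tracking the Zariski-local torsor trivialisations of Section~\ref{ssec:Torsors} and combining them with the properness and birationality of $\rho$.
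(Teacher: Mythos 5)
Your proof is correct, and it takes a genuinely different route from the paper's for both properness and birationality (smoothness is handled identically, by descent along the torsor). For properness, the paper verifies the valuative criterion by hand: it lifts a DVR-point of $\Omega(V)$ along the torsor, uses the Smith normal form to move the $\oHom^0(K^d,V)$-component into the valuation ring, constructs a retraction $\sigma$, and then invokes properness of $\rho\colon Z\to X(K^d)$. Your factorisation $\Omega(V)\to \oGr(d,V)\times X(V)\to X(V)$, checked over the standard trivialising charts, replaces that computation by completeness of the Grassmannian plus base change of $\rho$ --- essentially the classical argument for determinantal varieties (compare Example~\ref{xmp:resolutionmatrices}) and arguably more transparent. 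One small imprecision: Proposition~\ref{prp:Easy}(1) only says that each individual $X(\phi(u))$ is a closed embedding; to get that the family map $\gamma$ over $T$ is a closed embedding, choose the section on a standard chart so that a fixed coordinate projection $\sigma\colon V\to K^d$ satisfies $\sigma\circ\phi(u)=\id$ for all $u\in T$; then $(\id_T,X(\sigma))\circ\gamma=\id$, and a section of a separated morphism is a closed immersion --- the same retraction trick as in Remark~\ref{rmk:Birational-inverse}, so this is a fixable gap in citation, not in substance. For birationality, the paper (Proposition~\ref{prp:Birational}) proves the stronger statement that $\rho_V$ is an isomorphism over all of $S(V)$, the intersection of $\Lambda_{d,X}(V)\setminus\Lambda_{d-1,X}(V)$ with the nonsingular locus, via a pointwise fibre analysis combined with Zariski's Main Theorem; you instead factor through the incidence variety $\Gamma_{d,X}(V)$ and count geometric points in a general fibre, using the same two ingredients (uniqueness of $U_p$ when $\dim U_p=d$, and birationality of $\rho$) but only generically, which suffices for the theorem as stated. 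Your nonemptiness argument for the locus $\{\dim U_p=d\}$ via minimality of $d$ is correct and in fact spells out a point the paper treats tersely; what your version does not yield is the explicit locus over which $\rho_V$ is an isomorphism, which the paper exploits afterwards in Remark~\ref{rmk:Birational-inverse} and the closing remark (though not in Corollary~\ref{lm:dimlineartype}, which only needs birationality).
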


In particular, it will follow from the proof that $\dim(X(K^n))$ equals
a polynomial in $n$ for all $n \geq d$.

Smoothness of $\Omega(V)$ was already established in Section~\ref{ssec:Omega}, so it
remains to show that $\rho_V$ is proper and birational. 

\comment{
\begin{prp}\label{prp:resolutionmap}
    For every $V \in \bVec$ of dimension $\geq d$ the map $\rho_V: \Omega(V) \to X(V)$
    is surjective.
\end{prp}

\begin{proof}
For $q$ be a point in $X(V)$. By Proposition~\ref{prp: equivalent
conditions linear type}, there exists a point $\phi$ in $\oHom^0(K^d,V)$
and a point $p \in X(K^d)$ such that $X(\phi)(p)=q$. Since $\rho:Z
\to X(K^d)$ is a resolution of singularities, there exists a $z \in Z$
with $\rho(z)=p$.  Now $\rho_V([\phi,z])=q$ by definition of $\rho_V$.
\end{proof}
}

\begin{prp} 
    For every $V \in \bVec$ of dimension $\geq d$ the map $\rho_V:
    \Omega(V) \to X(V)$ is proper.
\end{prp}

It is convenient to keep diagrams~\eqref{eq:OmegaFunc} and,
especially,~\eqref{eq:Omega} at hand in the following argument.

\begin{proof}
    By \cite[\href{https://stacks.math.columbia.edu/tag/0208}{Tag 0208}]{stacks-project} it suffices to prove the valuative criterion for DVRs. That is, assume $R$ is a DVR with quotient field $L$ and we have a diagram
    \begin{equation}
        \label{eq:valuative-proper}
        \begin{tikzcd}
            \oSpec L \arrow[r, "\omega"] \arrow[d] & \Omega(V) \arrow[d, "\rho_V"] \\
            \oSpec R \arrow[r] & X(V).
        \end{tikzcd}
    \end{equation}
    Then we need to show the existence of a diagonal arrow $\oSpec R \to \Omega(V)$; such an arrow is unique as $\rho_V$ is separated. We lift $\omega$ to $(\varphi,z) \colon \oSpec L \to \oHom^0(K^d,V) \times Z$ and identify $\varphi$ with a $n\times d$-matrix with coefficents in $L$. Using the Smith normal form (over PIDs) we find $g\in \oGL_d(L)$ such that $\varphi \circ g^{-1}$ has all coefficients in $R$. After acting with $g$ we may therefore assume that $\omega$ lifts to $(\varphi,z)$ with $\varphi$ an $R$-point of $\oHom^0(K^d,V)$.

    The morphism $\rho_V$ is induced by the composition
    \[
        \begin{tikzcd}
            \oHom^0(K^d,V) \times Z \arrow[r, "(\oid{,}\rho)"] & \oHom^0(K^d,V) \times X(K^d) \arrow[r] & X(V).
        \end{tikzcd}
    \]
    The base change of the second morphism with respect to $\varphi$ yields a map $\widetilde{\varphi} \colon \oSpec R \times X(K^d) \to \oSpec R \times X(V)$. Now choose an $R$-point $\sigma$ of $\oHom(V,K^d)$ such that $\sigma \circ \varphi$ is the identity matrix. As before, the base change of $\oHom(V,K^d) \times X(V) \to X(K^d)$ with respect to $\sigma$ yields a morphism $\widetilde{\sigma} \colon \oSpec R \times X(V) \to \oSpec R \times X(K^d)$ such that $\widetilde{\sigma} \circ \widetilde{\varphi} = \oid$. Thus the composition
    \[
        \begin{tikzcd}
            \oSpec R \times Z \arrow[r, "(\oid{,}\rho)"] & \oSpec R \times X(K^d) \arrow[r, "\widetilde{\varphi}"] & \oSpec R \times X(V) \arrow[r, "\widetilde{\sigma}"] & \oSpec R \times X(K^d)
        \end{tikzcd}
    \]
    equals $(\oid,\rho)$. Then the diagram~\eqref{eq:valuative-proper} gives rise to
    \[
        \begin{tikzcd}
            \oSpec L \arrow[r] \arrow[d] & \oSpec R \times Z \arrow[d,"(\oid{,}\rho)"] \\
            \oSpec R \arrow[r] & \oSpec R \times X(K^d)
        \end{tikzcd}
    \]
    and there exists a diagonal arrow $\widetilde{z}$ since $\rho$ is proper. The point $\widetilde{\omega} \coloneqq (\varphi, \widetilde{z})$ induces a diagonal arrow in the original diagram~\eqref{eq:valuative-proper}. 
\end{proof}

\begin{prp} \label{prp:Birational}
    For every $V \in \bVec$ of dimension $\geq d$ the map $\rho_V:
    \Omega(V) \to X(V)$ is birational. Moreover, $\rho_V$ is an isomorphism over the set $S(V)$ given as the intersection of $\Lambda_{d,X}(V)\setminus \Lambda_{d-1,X}(V)$ and the nonsingular locus of $X(V)$.
\end{prp}

\begin{proof}
Recall that for all $e\geq 0$ the subspace variety $\Lambda_{e,X}(V)$ from Definition~\ref{dfn:incidence-variety} is a closed subvariety of $X(V)$.  By Proposition~\ref{prp: equivalent conditions
linear type} we have that $S(V)$ is dense open. We claim it is sufficent to prove that $\rho \colon S'(V) \coloneqq \rho_V^{-1}(X^0(V)) \to X^0(V)$ is a bijection on $\ol{K}$-rational points. Indeed, then the base change $\rho_{\ol{K}} \colon S'(V)_{\ol{K}} \to S(V)_{\ol{K}}$ of $\rho$ is a bijection on closed points. By Zariski's Main theorem, $\rho_{\ol{K}}$ factors as the composition of an open immersion and a finite morphism $\rho'$. But since $\ol{K}$ is of characteristic $0$, the degree of $\rho'$ equals the degree of a fiber over a general closed point, which is $1$. As $S(V)_{\ol{K}}$ is smooth and in particular normal, we have that $\rho'$ and thus $\rho_{\ol{K}}$ is an isomorphism. But then $\rho$ itself is isomorphism.

We now prove the claim. Let $p$ be a $\ol{K}$-rational point $X^0(V)$. By Lemma~\ref{lm:SingRemainsSing} the image of $p$ in $X(U_p)$ is nonsingular.
Let $[\phi,z] \in \Omega(V)$ be such that $\rho_V([\phi,z])=p$. This
means that $X(\phi)(\rho(z))=p$, and hence $p \in X(\im(\phi))$. But then
$\im(\phi)=U_p$ by Lemma~\ref{lm:minimaldimension}, so $p$ determines the
$\oGL_d$-orbit of $\phi$. Suppose that also $p=\rho_V([\phi \circ g,z'])$
for some $\ol{K}$-points $g$ of $\oGL_d$ and $z'$ of $Z$. The right-hand
side equals $\rho_V([\phi,gz'])$, and since $X(\phi)$ is injective,
we have $\rho(gz')=\rho(z)$. Since $X(\phi)$ is an isomorphism from
$X(K^d)$ to $X(U_p)$ and since $p$ is a nonsingular point of $X(U_p)$,
$\rho(z)$ is a nonsingular point of $X(K^d)$. Since $\rho$ is a resolution
of singularities, we have $gz'=z$. Hence the fibre over $p$ consists of
a single point.
\end{proof}

\begin{rmk}
    \label{rmk:Birational-inverse}
    Let us give an explicit local description of the inverse of $\rho_V$ for $V = K^n$. Let $p_0$ be a point in $S(V)$. In what follows we suppress residue fields (resp.\ their algebraic closures) in our notation. Assume, without loss of generality, that the projection of $U_{p_0}$ onto the first $d$ coordinates is surjective. Then for $p$ in a Zariski-open neighbourhood $B \subset S(V)$ of $p_0$, there is a unique linear map $\phi_p$ whose matrix is of the form
    \[ 
        \begin{bmatrix} I_d \\ * \end{bmatrix} 
    \]
    and which satisfies $\im(\phi_p)=U_p$. The map $p \mapsto \phi_p$ is a morphism. Let $\sigma \in \oHom(K^n, K^d)$ be the projection to the first coordinates, so that $\sigma \circ \phi_p$ is the $d \times d$-identity matrix and $\phi_p \circ \sigma$ restricts to the identity on $U_p$. Then, for all $p\in B$, $X(\sigma)$ maps $X(U_p)$ isomorphically to $X(K^d)$, and hence maps $p \in X(U_p)$ to a smooth point of $X(K^d)$, to which, since $\rho$ is a resolution of singularities, we can apply the inverse of $\rho$. Now
    \[ 
        p \mapsto [\phi_p, \rho^{-1}(X(\sigma)(p))] 
    \]
    is a morphism, and applying $\rho_V$ to the right-hand side yields
    \[
        X(\phi_p)(X(\sigma)(p))=p, 
    \]
    as desired.
\end{rmk}

\begin{cor}\label{lm:dimlineartype}
For every $V \in \bVec$ of dimension $\geq d$ we have 
\[ \dim(X(V)) = dim(X(K^d)) + d(\dim(V) - d). \]
\end{cor}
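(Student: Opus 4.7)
The plan is to compute $\dim(\Omega(V))$ explicitly and use the birationality of $\rho_V$ just established in Proposition~\ref{prp:Birational} to transfer the computation to $\dim(X(V))$. Since $\rho_V \colon \Omega(V) \to X(V)$ is proper and birational, and $X(V)$ is irreducible, we have $\dim(\Omega(V)) = \dim(X(V))$, so it suffices to compute the left-hand side.

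By construction, $\Omega(V)$ is the quotient of $\oHom^0(K^d,V) \times Z$ by a free action of $\oGL_d$, realised via the $\oGL_d$-torsor structure described in Section~\ref{ssec:Omega}. Hence
\[
    \dim(\Omega(V)) = \dim(\oHom^0(K^d,V)) + \dim(Z) - \dim(\oGL_d).
\]
Now $\oHom^0(K^d,V)$ is open in $\oHom(K^d,V)$, so its dimension is $d\cdot\dim(V)$; the group $\oGL_d$ has dimension $d^2$; and since $\rho \colon Z \to X(K^d)$ is a resolution of singularities and $X(K^d)$ is irreducible, $\dim(Z) = \dim(X(K^d))$. Substituting gives
\[
    \dim(X(V)) = \dim(\Omega(V)) = d\cdot\dim(V) + \dim(X(K^d)) - d^2 = \dim(X(K^d)) + d(\dim(V) - d),
\]
as desired.

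The only subtlety, and really no obstacle given the setup, is to be sure that the dimension formula for torsors applies: this is fine because the morphism $\oHom^0(K^d,V) \times Z \to \Omega(V)$ is, by construction in Section~\ref{ssec:Torsors}, Zariski-locally trivial with fibre $\oGL_d$, hence flat and smooth of relative dimension $d^2$. Note moreover that the argument implicitly uses the case $V=K^d$: there $\Omega(K^d) = (\oGL_d \times Z)/\oGL_d \cong Z$, and $\rho_{K^d}$ recovers $\rho$ itself, consistent with the formula.
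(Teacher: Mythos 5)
Your proof is correct and is essentially the same as the paper's: both use the birationality of $\rho_V$ from Proposition~\ref{prp:Birational} to equate $\dim(X(V))$ with $\dim(\Omega(V))$, and then compute the latter from the torsor construction as $\dim(\oHom^0(K^d,V)) + \dim(Z) - \dim(\oGL_d)$. Your added justifications (local triviality of the torsor, $\dim(Z)=\dim(X(K^d))$ via the resolution) are exactly the implicit steps the paper relies on.
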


\begin{proof}
By Proposition~\ref{prp:Birational} we have $\dim(X(V))=\dim(\Omega(V))$
and by the construction we have
\begin{align*} &\dim(\Omega(V))=\dim(\oHom^0(K^d,V)) + \dim(Z) - \dim(\oGL_d)\\
&= d \dim(V) + \dim(X(K^d)) - d^2 = \dim(X(K^d)) + d(\dim(V)-d).  \qedhere
\end{align*}
\end{proof}

This concludes the proof of Theorem~\ref{thm:WeakResolution}.

\subsection{Examples}

We now illustrate some strengths and weaknesses of the weak
resolution $\rho_V:\Omega(V) \to X(V)$ through a number of examples. We
start with an example where $\rho_V$ agrees with an earlier example.

\begin{xmp}[Example~\ref{xmp:resolutionsymmetric} revisited]
   Let $X \subset S^2$ be the variety of symmetric matrices of rank
   $\leq r$.  Then $d=r$, since any $p \in X(V)$ of rank equal to $r$
   lies in $X(U)$ for unique $r$-dimensional $U \subset V$. We have
   $X(K^r)=S^2(K^r)$, which is smooth, and hence $Z=X(K^r)$ and $\rho$
   is the identity. Now, for $V$ of dimension $\geq r$, we have
   \[ \Omega(V)=(\oHom^0(K^r, V) \times S^2(K^r))/\oGL_r \]
   and $\rho_V([\phi,p])=S^2(\phi)p$. This is readily seen to
   be isomorphic, even functorially in $V$, to the resolution of
   Example~\ref{xmp:resolutionsymmetric}.
\end{xmp}

Next an example that shows that $\rho_V$ is, in general, not an actual
resolution of singularities. 

\begin{xmp}
    Consider the $\bVec$-variety $X = S^1$. We have
    $\dim(X(K^n))=\dim(K^n)=n$, so $d=1$. The variety $X(K^1)$ is the
    affine line $\bbA^1$, hence smooth, so $Z=X(K^1)=\bbA^1$, and $\rho$ is
    the identity map. Now, for all $V$ of dimension at least $1$, we have
    \[ \Omega(V)=(\oHom^0(K^1,V) \times \bbA^1)/\oGL_1  \cong 
    ((V \setminus \{0\}) \times \bbA^1)/\oGL_1,  \]
    and under this isomorphism we have $\rho_V([v,z])=zv$. So $\rho_V$
    is the blow-up of the affine space $V$ in the origin.
\end{xmp}

\begin{xmp}[Example~\ref{xmp:resolutionmatrices} revisited]
Let $\cM_{\leq r} \subset T^2$ be the variety of rank $\leq
r$-matrices. The smallest subspace $U$ for which a rank-$r$ element of $V
\otimes V$ lies in $U \otimes U$ is typically $2r$-dimensional, so $d=2r$. 
As in Example~\ref{xmp:resolutionmatrices} we define 
\[ Z\coloneqq\{(p,U,W) \in (K^{2r} \otimes K^{2r}) \times \oGr(r,K^{2r}) \times
\oGr(r,K^{2r}) \mid p \in U \otimes W\} \]
and let $\rho:Z \to X(K^{2r})$ be the projection to the first factor. We
have
\[ \Omega(V)=(\oHom^0(K^{2r},V) \times Z)/\oGL_{2r} \]
and $\rho_V([\phi,(p,U,W)] = T^2(\phi)(p)$. We find that the map $\rho_V$
has positive-dimensional fibres over all matrices $p \in X(V)$ whose
row space and column space {\em together} span a space of dimension
$<2r$. This includes all singular points of $X(V)$, whose row and column
space each are of dimension $<r$, but many more. On the other hand,
$\rho_V$ is an isomorphism over the open subscheme ov $X(V)$ where the
row and column space together span a space of dimension equal to $2r$.
\end{xmp}

\begin{rmk}
In the setting of Proposition~\ref{prp:Birational}, a point $p \in
X(V)$ for which $U_p$ has dimension $d$ is nonsingular in $X(V)$ if
and only if it is nonsingular in $X(U_p)$. ``Only if'' is the content
of Lemma~\ref{lm:SingRemainsSing} (and was used above). For ``if'',
we observe that the morphism defined in Remark~\ref{rmk:Birational-inverse} is actually
defined on all points for which $X(U_p)$ is nonsingular. 

Recall that in Section\ref{sec:lineartype} we established the existence of a
bound $F(d,c)$ such that for all $V \in \bVec$ of dimension $d+k$ with
$k \geq F(d,c)$ we have $X^\tsing(V)=\oSing(X(V))$. It would be nice to
have a much more explicit bound, e.g. linear in $d$. The main obstacle
to obtaining such a bound is that for nonsingular points $p \in X(K^d)$
with $\dim(U_p)<d$ we do not yet have a more explicit bound on $k$ where
we need to test whether $p$ is still nonsingular in $X(K^d \oplus K^k)$.
\end{rmk}

\printbibliography

\end{document}